\newcommand\redcircle[1]{\filldraw[fill=white, draw=red] #1 circle (2pt)}
\newcommand\bluedot[1]{\filldraw[blue] #1 circle (2pt)}
\newcommand\dotlabel[1]{$\scriptstyle{#1}$}
\newcommand\regionlabel[1]{$\scriptstyle{#1}$}
\newcommand\tokenlabel[1]{$\scriptstyle{#1}$}
\tikzset{anchorbase/.style={>=To,baseline={([yshift=-0.5ex]current bounding box.center)}}}
\tikzset{wipe/.style={white,line width=4pt}}
\tikzset{->-/.style={decoration={
  markings,
  mark=at position #1 with {\arrow{>}}},postaction={decorate}}}
\tikzset{-<-/.style={decoration={
  markings,
  mark=at position #1 with {\arrow{<}}},postaction={decorate}}}
\crefname{defin}{Definition}{Definitions}
\crefname{eg}{Example}{Examples}
\crefname{lem}{Lemma}{Lemmas}
\crefname{theo}{Theorem}{Theorems}
\crefname{equation}{}{}
\crefname{enumi}{}{}
\newcommand\C{\mathbb{C}}
\newcommand\N{\mathbb{N}}
\newcommand\kk{\Bbbk}
\newcommand\AH{\mathcal{AH}}
\newcommand\AW{\mathcal{AW}}
\newcommand\cB{\mathcal{B}}
\newcommand\cH{\mathcal{H}}
\newcommand\cS{\mathcal{S}}
\newcommand\cW{\mathcal{W}}
\newcommand\ba{\mathbold{a}}
\newcommand\baone{\mathbold{a_1}}
\newcommand\batwo{\mathbold{a_2}}
\newcommand\one{\mathbb{1}}
\newcommand\dg{\mathrm{deg}}
\newcommand\tr{\mathrm{tr}}
\newcommand\rC{\mathcal{C}}
\newcommand\rD{\mathcal{D}}
\newcommand\rL{\mathcal{L}}
\newcommand\rI{\mathcal{I}}
\newcommand\rJ{\mathcal{J}}
\newcommand\rM{\mathcal{M}}
\newcommand\rN{\mathcal{N}}
\newcommand\rF{\mathcal{F}}
\newcommand\tM{\mathscr{M}}
\newcommand\tL{\mathscr{L}}
\newcommand\Quiv{\mathbf{Quiv}}
\newcommand\Cat{\mathbf{Cat}}
\newcommand\LCat{\mathbf{LinCat}}
\newcommand\MQuiv{\mathbf{MonQuiv}}
\newcommand\LMCat{\mathbf{LinMonCat}}
\def\chk#1{#1^{\smash{\scalebox{.8}[1.4]{\rotatebox{90}{\textnormal{\guilsinglleft}}}}}} % For check (dual)
\DeclareMathOperator{\End}{End}
\DeclareMathOperator{\Hom}{Hom}
\DeclareMathOperator{\id}{id}
\newtheorem{theo}{Theorem}[section]
\newtheorem{prop}[theo]{Proposition}
\newtheorem{lem}[theo]{Lemma}
\newtheorem*{lem*}{Lemma}
\theoremstyle{definition}
\newtheorem{defin}[theo]{Definition}
\newtheorem{rem}[theo]{Remark}
\numberwithin{equation}{section}
  \newcommand{\comments}[1]{
    \ \\
    {\color{red}
      \textbf{AS:} #1
    }
    \\
    }
  \newcommand{\comments}[1]{}
  \newcommand{\details}[1]{
      \ \\
      {\color{OliveGreen}
%        {\footnotesize \textbf{Details:} #1}
        \textbf{Details:} #1
      }
      \\
  }
  \newcommand{\details}[1]{}
\begin{document}
%
%%%%%%%%%%%%%%%%%%%%%%%%%%%%%%%%%%%

\title{Presentations of linear monoidal categories and their endomorphism algebras}
\author{Bingyan Liu}
\address{
  Department of Mathematics and Statistics \\
  University of Ottawa
}
\address{
  School of Mathematical Science \\
  Tongji University
}
\email{bingyanliu21@gmail.com}

\begin{abstract}
  We give the definition of presentations of linear monoidal categories. Our main result is that given a presentation of a linear monoidal category, we can produce a presentation of the same category as a linear category. We apply this result to endomorphism algebras of certain important linear monoidal categories.
\end{abstract}

\subjclass[2010]{18D10}
\keywords{Monoidal category, linear category, free category, presentation of category, endomorphism algebra}

\maketitle
\thispagestyle{empty}

\tableofcontents

%=====================
\section{Introduction}
%====================
%Strict linear monoidal categories are categories equipped with linear tensor products that satisfying certain properties.
Categorification is an exciting and relatively new area of research. It is the process of replacing set-theoretical objects by their categorical analogues. Strict linear monoidal categories have been playing an important role in the study of categorification, since they are categorification of classic algebraic objects---rings, where the tensor product is the categorification of the product in the ring.

In practice, (strict) linear monoidal categories are often defined by some generating objects and generating morphisms where objects and morphisms are formal tensor products of generators, with certain relations imposed. This leads to the concept of presentation of linear monoidal categories. Although common, the definition of presentation of linear monoidal categories and certain prerequisite concepts are not well represented in the literature. To make these notations precise is one of the goals of the current article.
We first review the notion of linear categories and define the notion of presentation of linear categories in \cref{sec:LinCat}. We denote such a presentation by $\left<X,E,R\right>$ where $X$ is a set of generating objects, $E$ is a set of generating morphisms, and $R$ is a set of relations on morphisms.
Then we build the monoidal analogue of these notions in \cref{sec: LinMonCat}. Concretely speaking, we define free linear monoidal categories, tensor ideals and presentations of linear monoidal categories, denoted by $\left<X,E,R\right>_\otimes$ if the category is generated by objects in $X$ and morphisms in $E$ with relations $R$ imposed. In particular, we show the existence of the free linear monoidal categories by concrete constructions in \cref{sec:LinMonCatExist}.

In a linear monoidal category $\rC$, the endomorphism sets $\End_\rC(a)$ form algebras, where the product is the composition of morphisms. The tensor product induces an extra structure on the endomorphism algebras. Thus to study the structure of the endomorphism algebras, it proves useful to forget the tensor product and view the category as only a linear category. Therefore, for a monoidal category given by presentation $\left<X,E,R\right>_\otimes$, we are motivated to give a presentation $\left<X^\prime,E^\prime,R^\prime\right>$ of it as a linear category and study the structure of its endomorphism algebras according to the presentation   $\left<X^\prime,E^\prime,R^\prime\right>.$

In \cref{sec:MainTheorem}, we introduce our main result \cref{MainTheorem}, which is stated without proof in \cite[\S  2.6]{Bru14}. Namely, given a presentation  of a linear monoidal category $\rC$ with presentation $\left<X,E,R\right>_\otimes$, we produce a presentation $\left<M(X),\bar{E},\bar{R}\right>$ of $\rC$ as a linear category. Roughly speaking, $M(X)$ is the free monoid over $X$, the morphisms $\bar{E}$ are morphisms of the form $1_v \otimes e \otimes 1_w$, and the relations are those obtained from $R$ by replacing each $r$ with $1_v \otimes r \otimes 1_w$ together with the interchange law for morphisms in $\bar{E}$.
For the structure of the endomorphism algebras, we consider a special case where all generating morphisms are endomorphisms. Given the presentation of the linear monoidal category $\left<X,E,R\right>_\otimes$, we describe a presentation of the endomorphism algebras, with the generators and relations of that algebra in those of the presentation $\left<M(X), \bar{E}, \bar{R}\right>$. This result is stated in  \cref{thm:presentationofalgebra}.

As important categories in the study of categorification, we introduce several examples of linear monoidal categories and visualize these categories with the help of string diagrams. As applications of our main results, we identify their endomorphism algebras with certain well-studied associative algebras. These identifications are stated in \cite[\S 3]{Savage18} without proof.
\color{black}

Fix a commutative ground ring $\kk$. In this article, all linear categories are $\kk$-linear categories and monoidal categories are strict monoidal categories.

\subsection*{Acknowledgements.} I would like to express my sincere gratitude to Professor Alistair Savage for his patience and guidance throughout the whole project. I would like to thank the Mitacs Organization, the China Scholarship Council and the University of Ottawa for providing me this great opportunity.

%=====================
\section{Generators, relations and presentations of linear categories}
%=====================

In this section, the main goal is to give a precise definition of presentations of ($\kk$-)linear categories. We will first review the definition of quivers, (small) linear categories and the free linear category over a quiver. Afterwards, we recall the definition of ideals of linear categories, the quotient category by an ideal and presentations of linear categories.

%---------------------
\subsection{Quivers and linear categories}\label{sec:LinCat}
%----------------------
A \emph{quiver} $Q=(V,E,\partial_0,\partial_1)$ consists of a set of vertices $V$ and a set of edges $E$, together with two functions $\partial_0, \partial_1 \colon E \to V$, which are called the start point function and end point function respectively. We write $e \colon a \to b$ if $\partial_0 e= a $ and $\partial_1 e=b$ and often write $Q=(V,E)$ for conciseness leaving the maps $\delta_0$ and $\delta_1$ implied. A \emph{quiver morphism} $\phi$ from a quiver $Q=(V,E)$ to a quiver $Q^\prime=(V^\prime,E^\prime)$ is a pair of functions $\phi=(\phi_0,\phi_1)$ where $\phi_0$ is a function from $V$ to $V^\prime$ and $\phi_1$ is a function from $E$ to $E^\prime$ such that the following two diagrams commute:
  \begin{equation}\label{QuivDiag}
    \begin{tikzcd}
    E\arrow{r}{\phi_1} \arrow{d}{\partial_0} & E^\prime\arrow{d}{\partial_0}\\
    V\arrow{r}{\phi_0}&V^\prime
    \end{tikzcd},
    \qquad
    \begin{tikzcd}
    E\arrow{r}{\phi_1}\arrow{d}{\partial_1}&E^\prime\arrow{d}{\partial_1}\\
    V\arrow{r}{\phi_0}&V^\prime
    \end{tikzcd}.
  \end{equation}

Quivers and quiver morphisms constitute the category of quivers $\Quiv$, where quivers serve as objects, quiver morphisms serve as morphisms and the composition of morphisms is the composition of functions. The composition of morphisms is clearly associative. The pair of identity maps $id_Q=(id_V,id_E)$ serves as the identity morphism over a object $Q=(V,E)$.

A (small) category is called a \emph{linear category} if its hom-sets are $\kk$-modules and the composition of morphisms is bilinear, namely
\begin{itemize}
	\item $(k_1\ f_1+k_2\ f_2) \circ g = k_1 (f_1 \circ g) + k_2 (f_2 \circ g),$
   \item $f \circ (k_1\ g_1+k_2\ g_2) = k_1 (f \circ g_1) + k_2 (f \circ g_2),$
\end{itemize}
for $k_1,k_2 \in \kk$ and $f,f_1,f_2,g,g_1,g_2$ morphisms such that the above operations are defined.

A \emph{linear functor} $F$ between linear categories is a functor satisfying \[F ( k_1\  f_1 + k_2\  f_2) = k_1 F(f_1)+k_2 F(f_2).\]
The category of  linear categories $\LCat$ has linear categories as objects and linear functors as morphisms.

We can define the \emph{forgetful functor} $U$ from the category of (small) linear categories $\LCat$ to the category of quivers $\Quiv$. It is based on the fact that any (small) linear category $\rC$ is manifestly a quiver $U\rC$ where the set of objects $Ob(\rC)$ forms the set of vertices and the edges consist of all morphisms $\Hom(\rC)$.

\begin{defin}\label{def:freelincat} The \emph{free linear category} over a quiver $Q$ is a linear category $\tL Q$ together with a quiver morphism $\iota$ from $Q$ to $U\tL Q$ that satisfies the following universal property: for any linear category $\rC$ and quiver morphism $\phi$ from $Q$ to $U\rC$, there exists a unique linear functor $\phi^\prime \colon \tL Q \to \rC$, such that $\phi$ factors through $\iota$ as $U\phi^\prime \circ \iota = \phi$, as the diagram below indicates:
    \begin{equation}
   		\begin{tikzcd}
        	& 			&&Q\arrow{rd}{\phi}\arrow{d}{\iota}&\\
            \tL Q\arrow{r}{\exists! \phi^\prime}&\rC,			&&U \tL Q\arrow[swap,dashrightarrow]{r}{U \phi^\prime}&U\rC.
        \end{tikzcd}
    \end{equation}
\end{defin}
The free linear category $\tL Q$ over a quiver $Q$ is unique up to isomorphism. This follows from the fact that $(\tL Q,\iota)$ is an initial object in the comma category $(Q \downarrow U)$, and the initial object in a category is unique up to isomorphism. One can find a detailed discussion in \cite[II.8]{MacLaneCat}.

The existence of $\tL Q$ also holds. Before we show this, we need the concept of free category. The \emph{free category} $\rF$ over a quiver $Q=(V,E)$ is the category whose objects are $V$ and morphisms are formal finite compositions of composable edges, where two edges $e_1, e_2$ are composable if $\partial_0 e_1 = \partial_1 e_2$. We may also call the morphisms in the free category \emph{paths} on $Q$ and call an identity morphism $1_v$ a trivial path.

Let $U_0$ be the forgetful functor from $\Cat$ to $\Quiv$ and $\iota_0$ be the evident embedding quiver morphism from $Q$ to $U_0\rF$. The free category $\rF$ together with $\iota_0$ satisfies the following universal property: given any category $\rC$ and quiver morphism $\phi \colon Q \to U_0\rC$, there exists a unique functor $\psi$ from $\rF$ to $\rC$, such that $U_0\psi \circ \iota_0 = \phi$. One can find a detailed discussion of free category in \cite[II.7]{MacLaneCat}.

\begin{theo}
Let $\rF$ be the free category over a quiver $Q$ and $\rN$ be the category whose objects are the same as those of $\rF$ and whose hom-sets are the free $\kk$-modules on the hom-sets of $\rF$. Then $\rN$ is the free linear category $\tL Q$.
\end{theo}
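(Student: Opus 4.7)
The plan is to verify the universal property directly, leveraging the universal property of the ordinary free category $\rF$ that is cited just before the theorem. First I would make $\rN$ into a linear category: its objects agree with those of $\rF$ (which agree with the vertices $V$), and $\Hom_\rN(a,b)$ is by definition the free $\kk$-module on $\Hom_\rF(a,b)$, so it is automatically a $\kk$-module. Composition in $\rN$ is obtained by taking the composition of $\rF$ on basis elements (paths) and extending $\kk$-bilinearly; associativity, unit laws and bilinearity then follow immediately from the corresponding properties in $\rF$ together with the universal property of free modules. Note also that the forgetful functor $U \colon \LCat \to \Quiv$ factors as $U = U_0 \circ U_1$, where $U_1 \colon \LCat \to \Cat$ forgets the linear structure and $U_0 \colon \Cat \to \Quiv$ is the functor introduced above; in particular $U_1 \rN$ contains $\rF$ as the sub-category generated by basis elements.

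Next I would define the quiver morphism $\iota \colon Q \to U\rN$ as the composition of $\iota_0 \colon Q \to U_0 \rF$ with the evident inclusion of $\rF$ into $U_1 \rN$ (which on hom-sets sends a path to itself, viewed as a basis element of the free $\kk$-module). To check the universal property, suppose $\rC$ is a linear category and $\phi \colon Q \to U\rC = U_0 U_1 \rC$ is a quiver morphism. Applying the universal property of $\rF$ to the plain category $U_1 \rC$, we obtain a unique functor $\psi \colon \rF \to U_1 \rC$ with $U_0 \psi \circ \iota_0 = \phi$. I then define $\phi' \colon \rN \to \rC$ to agree with $\psi$ on objects and to send a formal $\kk$-linear combination $\sum_i k_i p_i$ of paths to the morphism $\sum_i k_i \psi(p_i)$ in $\rC$; this is well-defined because each $\Hom_\rN(a,b)$ is free on $\Hom_\rF(a,b)$, and it is $\kk$-linear on hom-sets by construction. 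Functoriality (identities and composition) reduces, via bilinearity of composition in $\rC$ and the distributive definition of composition in $\rN$, to functoriality of $\psi$, so $\phi'$ is a linear functor. The identity $U\phi' \circ \iota = \phi$ is immediate from $U_0\psi \circ \iota_0 = \phi$.

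For uniqueness, suppose $\phi''\colon \rN \to \rC$ is any linear functor with $U\phi'' \circ \iota = \phi$. Then on objects $\phi''$ must coincide with $\psi$ (and hence with $\phi'$), and on basis paths $\phi''$ must send $\iota(e_n)\circ\cdots\circ\iota(e_1)$ to $\phi(e_n)\circ\cdots\circ\phi(e_1) = \psi(e_n\cdots e_1)$, by functoriality applied to the factorisation of each path into generating edges. Linearity then forces $\phi'' = \phi'$ on the whole of $\rN$, since every morphism of $\rN$ is a finite $\kk$-linear combination of such paths. This gives the desired unique factorisation, and I expect the main (minor) obstacle to be bookkeeping: being careful to distinguish the two forgetful functors $U$ and $U_0$, and to record that composition in $\rN$ really is well-defined and bilinear before invoking it in the arguments for $\phi'$. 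Everything else is a direct transfer of the universal property from $\rF$ to $\rN$ via free-module extension.
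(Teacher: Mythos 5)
Your proposal is correct and follows essentially the same route as the paper: factor the forgetful functor as $U = U_0 \circ U_1$, invoke the universal property of the free category $\rF$ to obtain $\psi$, and then extend to $\phi'$ by $\kk$-linearity using the freeness of the hom-modules of $\rN$. Your uniqueness argument works directly on basis paths rather than chaining the two uniqueness clauses as the paper does, but this is only a cosmetic difference.
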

\begin{proof}
Let $\Cat$ be the category of (small) categories. Consider the natural forgetful functors in the following diagram:
     \[\begin{tikzcd}
        & \Cat  \arrow{dr}{U_0} &\\
     	\LCat \arrow{ur}{U_1} \arrow {rr}{U}&  & \Quiv.
     \end{tikzcd}\]
It is clear that this diagram commutes. Let $\iota_0$ be the natural embedding quiver morphism from $Q$ to $U_0\rF$ and $\iota_1$ be the natural embedding functor from $\rF$ to $U_1\rN$. We define  $\iota \colon Q \to U\rN$ to be $\iota = U_0\iota_1 \circ \iota_0 $.

Now, for any linear category $\rC$ and quiver morphism $\phi$ from $Q$ to $U\rC$, by the universal property of $\rF$, there exists a unique functor $\psi \colon \rF \to U_1 \rC$, such that $ \phi = U_0 \psi \circ \iota_0 $, as the diagram below indicates. For $\psi$, since the hom-sets of $\rN$ are free $\kk$-modules, it's easy to show there exists a unique linear functor $\phi^\prime \colon \rN \to \rC$, such that $\psi =U_1\phi^\prime \circ \iota_1$, as the diagram below indicates.
\begin{equation}\label{diag:freelincatexsitprove}
    \begin{tikzcd}
    	 Q\arrow{r}{\iota_0}\arrow[swap]{rd}{\phi}  &
         U_0 \rF\arrow[dashrightarrow]{d}{U_0\psi}  & \rF\arrow[dashrightarrow]{d}{\psi}
       \\
       & UL=U_0U_1\rC &  U_1\rC
    \end{tikzcd},\quad
	\begin{tikzcd}
	    \rF\arrow{r}{\iota_1}\arrow[swap]{rd}{\psi}
    	& U_1\rN \arrow[dashrightarrow]{d}{U_1\phi^\prime}& \rN \arrow[dashrightarrow]{d}{\phi^\prime}&  \\
       & U_1\rC & \rC
    \end{tikzcd}.
\end{equation}

Thus, applying $U_0$ to the diagram of $\phi^\prime$ on the right in \cref{diag:freelincatexsitprove}, and connecting it with the other diagram, we have that the diagram below commutes.
\begin{equation}\label{diag:freelincatexistprove2}
	\begin{tikzcd}
    	Q \arrow[swap]{rd}{\phi}\arrow{r}{\iota_0}
        & U_0\rF \arrow[dashrightarrow]{d}{U_0\psi} \arrow{r}{U_0\iota_1}
        & U_0U_1\rN \arrow[dashrightarrow]{ld}{U_0U_1\phi^\prime}\\
        & U_0U_1\rC &
    \end{tikzcd}
\end{equation}
Thus we have $\phi= U_0U_1 \phi^\prime \circ U_0 \iota_1 \circ \iota_0 = U\phi^\prime \circ \iota.$

Suppose there exists another linear functor $\psi^\prime$ which satisfies $\phi = U \psi^\prime \circ \iota$. We have $ \phi = U_0 ( U_1 \psi^\prime \circ \iota_1 )\circ \iota_0$. By the uniqueness of $\psi$, we have $ U_1 \psi^\prime \circ \iota_1 = \psi$. By the uniqueness of $\phi^\prime$, we have $\psi^\prime=\phi^\prime$. Thus $\rN=\tL Q$.
\end{proof}

As constructed above, the hom-set ${\tL Q}(a,b)$ for any objects $a$, $b$ are free $\kk$-modules over the hom-sets ${\rF}(a,b)$ of $\rF$. Therefore morphisms in $\tL Q$ can always be uniquely written as linear combinations of morphisms in $\rF$, namely linear combinations of paths on $Q$.

%------------------
\subsection {Ideals, relations and presentations} \label{sec:LinCatPre}
%------------------
\begin{defin}
	An \emph{ideal} $\rI$ of a linear category $\rL$ is a linear subcategory $\rI$ such that
\begin{itemize}
	\item for all $f \in \rI(a,b)$, $g \in \rL(b,c)$, we have $g \circ f \in \rI(a,c)$,
   \item for all $f \in \rL(a,b)$, $g \in \rI(b,c)$, we have $g \circ f \in \rI(a,c)$.
\end{itemize}
Let $R$ be a set of morphisms of a linear category $\rL$, not necessary in a single hom-set.
The \emph{ideal generated by} $R$ is the minimal ideal that contains $R$, denoted as $\left<R\right>$,  which can also be described as the intersection of all ideals that contain $R$.
\end{defin}
To give a explicit description of the generated ideal generated by a set of morphisms, we have the following proposition.

\begin{prop}\label{idealstruc}
Let $R$ be a set of morphisms of a linear category $\rC$. Then the ideal $\left<R\right>$ consists of all morphisms of the form
	\begin{equation} \label{equ:idealstruc}
    	\sum_{i=1}^n k_i\  (f^\prime_i\circ r_i \circ f_i),
    \end{equation}
where $n \in \N$, $k_i \in \kk$, $r_i \in R$ and $f_i$, $f^\prime_i$ are morphisms in $\rC$ such that the above operations are defined.
\end{prop}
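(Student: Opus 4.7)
The plan is to let $S$ denote the family of subsets $S(a,b) \subseteq \rC(a,b)$ consisting of all finite sums of the form \eqref{equ:idealstruc} landing in $\rC(a,b)$, and then to show $S = \langle R \rangle$ by verifying (i) that $S$ is an ideal of $\rC$ containing $R$, and (ii) that every ideal of $\rC$ containing $R$ must contain $S$. Since $\langle R\rangle$ is by definition the minimal such ideal, (i) gives $\langle R\rangle \subseteq S$ and (ii), applied to $\rI = \langle R\rangle$, gives the reverse inclusion.

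For (i), I would first check that each $S(a,b)$ is a $\kk$-submodule of $\rC(a,b)$: the sum of two expressions of the stated shape is again of that shape (by concatenating the summation indices), and scalar multiplication is absorbed into the coefficients $k_i$. To verify the two absorption properties of an ideal, take $g \in \rC(b,c)$ and $\sum_i k_i\,(f'_i\circ r_i \circ f_i) \in S(a,b)$; using bilinearity and associativity of composition, I rewrite
\[
  g \circ \sum_i k_i\,(f'_i\circ r_i \circ f_i)
  \;=\; \sum_i k_i\,\bigl((g\circ f'_i)\circ r_i\circ f_i\bigr),
\]
which visibly lies in $S(a,c)$; pre-composition by a morphism of $\rC$ is symmetric. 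Finally, for each $r\colon a\to b$ in $R$, the singleton expression $1_b\circ r\circ 1_a$ with coefficient $1$ exhibits $r\in S(a,b)$, so $R \subseteq S$.

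For (ii), let $\rI$ be any ideal of $\rC$ containing $R$. Each $r_i\in R$ lies in $\rI$, and by the two absorption axioms $f'_i\circ r_i\circ f_i \in \rI$ whenever the composition is defined; since each hom-set $\rI(a,b)$ is a $\kk$-submodule, any finite $\kk$-linear combination of such triple composites remains in $\rI(a,b)$. Hence $S(a,b) \subseteq \rI(a,b)$ for all $a,b$, i.e.\ $S \subseteq \rI$. Specializing to $\rI = \langle R\rangle$ and combining with (i) yields $S = \langle R\rangle$. I do not anticipate a genuine obstacle; the argument is essentially bookkeeping. The only mildly delicate point is that the $r_i$ appearing in a single sum need not share common source or target, only that each composite $f'_i\circ r_i\circ f_i$ lie in the fixed hom-set $\rC(a,b)$, which is automatic from the requirement that all the compositions be defined.
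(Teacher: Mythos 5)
Your proof is correct and follows essentially the same route as the paper: both show that the collection of sums $\sum_i k_i\,(f'_i\circ r_i\circ f_i)$ is itself an ideal containing $R$ (giving one inclusion by minimality) and that any ideal containing $R$ absorbs all such sums (giving the other). Your version is in fact slightly more careful than the paper's, e.g.\ in exhibiting $r = 1_b\circ r\circ 1_a$ to see $R\subseteq S$ and in noting that the $r_i$ within one sum need not share a common hom-set.
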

\begin{proof}
Let $\rI$ denote the category with all morphisms of the form \cref{equ:idealstruc}. Then $\rI$ forms a linear subcategory.
Since the composition of morphisms is bilinear, we have
	\begin{align*}
    	g^\prime \circ \left(\sum_{i=1}^n k_i (f^\prime_i\circ r_i \circ f_i)\right) \circ g
       =\sum_{i=1}^n k_i (g^\prime \circ f^\prime_i) \circ r_i \circ (f_i \circ f^\prime_i) \in \Hom(\rI),
	\end{align*}
for any morphisms $g$, $g^\prime$ such that the operations above are defined. Thus $\rI$ forms an ideal and $\rI$ contains $R$. Hence $\rI$ contains $\left<R\right>$. Conversely, since $R$ is contained in $\left<R\right>$ and $\left<R\right>$ is an ideal, we have $f^\prime_i\circ r_i \circ f_i \in \left<R\right>$ for any $f$,$f^\prime$ such that these compositions are defined. Since $\left<R\right>$ is closed under linear operations, all morphisms in \cref{idealstruc} are in $\left<R\right>$, namely $\rI \subseteq \left<R\right>$. Thus $\rI=\left<R\right>$.
\end{proof}

\begin{defin}
	The \emph{quotient category} of a linear category $\rC$ by an ideal $\rI$ is the linear category $\rC/\rI$ whose objects are the same as those of $\rC$ and whose hom-sets are the quotient $\kk$-modules $\rC(a,b)/\rI(a,b)$. For $f \in \rC(a,b)$, let $[f]$ denote the coset of $f$. Then the linear operation and composition of morphisms are defined as follows:
    \begin{align*}
      [f]+[g]=[f+g], \quad k[f]=[kf], \quad [g] \circ [f]=[g \circ f],
    \end{align*}
for $k\in \kk$ and $f,g$ morphisms in $\rC$, such that the above operations are defined.
\end{defin}
The linear operations are exactly the linear operations of the quotient module. For the composition of morphisms, the definition of ideals guarantees that it is well-defined: Let $[f]=[f^\prime]$, $[g]=[g^\prime]$, then $f-f^\prime, g-g^\prime \in \Hom(\rI)$, we have
	\begin{align*}
    	g \circ f - g^\prime \circ f^\prime
        = g \circ f - g \circ f^\prime + g \circ f^\prime - g^\prime \circ f^\prime
        = g \circ (f - f^\prime) + (g - g^\prime) \circ f  \in \Hom(\rI),
    \end{align*}
therefore $[g \circ f] = [g^\prime \circ f^\prime]$.

Now we define presentations of linear categories.
\begin{defin}
A \emph{presentation} of a linear category $\rC$ is a triple $(V,E,R)$, where $R$ is a set of morpihsms of the free linear category $\tL Q$ over the quiver $Q=(V,E)$, such that $\rC$ is isomorphic to the quotient category of $\tL Q$ by the ideal generated by $R$, namely the linear category $\tL Q / \left<R\right>$.
\end{defin}

%==================
\section{Presentations of linear monoidal categories}
%==================

In this section, our main goal is to give a precise definition of presentations of (strict) linear monoidal categories. This section is developed by making a monoidal analogue of the previous section. We will first define monoidal quivers, small linear monoidal categories and the free linear monoidal category over a quiver. The existence of the free linear monoidal category is demonstrated in \cref{sec:LinMonCatExist}. Afterwards, we will define tensor ideals of linear monoidal categories, the quotient category by a tensor ideal and presentations of linear monoidal categories.

%------------------
\subsection{Monoidal quivers and linear monoidal categories} \label{sec: LinMonCat}
%------------------
We make a monoidal analogue of quivers, linear categories, the forgetful functor and the free linear category.

A \emph{monoidal quiver} $Q=(V,E)$ is defined to be a quiver whose vertex set $V$ is a monoid. A \emph{monoidal quiver morphism} is a quiver morphism such that the map on vertices is also a monoidal morphism. They constitute the category $\MQuiv$ of monoidal quivers.

A \emph{(strict) linear monoidal category} $\rD$ is a linear category equipped with a tensor product (bifunctor) $\otimes$ such that
\begin{itemize}
	\item $\otimes$ is associative on both objects and morphisms, and
	\item there exists $\one \in Ob(\rD)$ such that $\one \otimes a = a = a \otimes \one$ for any object $a \in Ob(\rD)$
    and $1_\one \otimes f = f = f \otimes 1_\one$ for any morphism $f  \in  \Hom(\rD)$, and
   \item the bifunctor $\otimes$ is bilinear, namely
   	 	\begin{align*}
				(k_1\ f_1+k_2\ f_2) \otimes g = k_1 (f_1 \otimes g) + k_2 (f_2 \otimes g),\\
  			    f \otimes (k_1\ g_1+k_2\ g_2) = k_1 (f \otimes g_2) + k_2 (f \otimes g_2),
		 \end{align*}
    where $k_1,k_2 \in \kk$ and $f,f_1,f_2,g,g_1,g_2$ are morphisms such that the above operations are defined.
\end{itemize}

A \emph{linear monoidal functor} $F$ between linear monoidal categories is a linear functor $F$ such that $F$ also commutes with $\otimes$, namely $F(a\otimes b) = F(a) \otimes F(b)$ for any objects $a,b$ and $F(f \otimes g) = F(f) \otimes F(g)$ for any morphisms $f,g$.
The category of small linear monoidal categories $\LMCat$ consists of linear monoidal categories as objects and linear monoidal functors as morphisms.

We can define the \emph{forgetful functor} $U$ from the category of small linear monoidal categories $\LMCat$ to the category of monoidal quivers $\MQuiv$. It is based on the fact that any (small) linear monoidal category $\rD$ is manifestly a monoidal quiver $U\rD$ where the vertice are $Ob(\rD)$ and edges consists of all morphisms $\Hom(\rD)$.

\begin{defin}\label{def:freelinmoncat} The \emph{free linear monoidal category} over a monoidal quiver $Q$ is a category $\tM Q$ together with a monoidal quiver morphism $\tau$ from $Q$ to $U\tM Q$, where $U$ is the forgetful functor defined above, that satisfies the following universal property: for any linear monoidal category $\rD$ and monoidal quiver morphism $\phi$ from $Q$ to $U\rD$, there exists a unique linear functor $\phi^\prime \colon \tM Q \to \rD$, such that $\phi$ factors through $\tau$ as $U\phi^\prime \circ \tau = \phi$, as the diagram below indicates:
    \begin{equation}
   		\begin{tikzcd}
        	& 			&&Q\arrow{rd}{\phi}\arrow{d}{\tau}&\\
            \tM Q\arrow{r}{\phi^\prime}&\rD,			&&U \tM Q\arrow[swap,dashrightarrow]{r}{U \phi^\prime}&U\rD.
        \end{tikzcd}
    \end{equation}
\end{defin}

The free linear monoidal category $\tM Q$ is unique up to isomorphism by a conventional proof. The question is does the free linear monoidal category always exist? Fortunately, the answer is \emph{yes}. We will give a detailed construction in \cref{sec:LinMonCatExist}.

%---------------------
\subsection{Tensor ideals, relations and presentations}
%---------------------

\begin{defin}
	A \emph{tensor ideal} of a linear monoidal category $\rD$ is a linear monoidal subcategory $\rI$ such that
\begin{itemize}
	\item $\rI$ is an ideal of $\rD$ when $\rI$ and $\rD$ are viewed as merely linear categories, and
	\item it satisfy the ideal property, namely
	\begin{align*}
         g \otimes f \in \rI(a \otimes c,b \otimes d),\ \text{for all } f \in \rI(a,b),\ g \in \rD(c,d)\\
         g \otimes f \in \rI(a \otimes c,b \otimes d),\ \text{for all } f \in \rD(a,b),\  g \in \rI(c,d)
	\end{align*}
\end{itemize}

Let $R$ be a set of morphisms of a linear monoidal category $\rD$, not necessary in a single hom-set.
The \emph{tensor ideal generated by} $R$ is the minimal tensor ideal that contains $R$, denoted as $\left<R\right>_\otimes$, which can also be described as the intersection of all tensor ideals that contain $R$.
\end{defin}

The following proposition proves useful.
\begin{prop} \label{prop:tenequivdef}
Let $\rD$ be a linear monoidal category. Suppose a linear monoidal subcategory $\rI$ satisfies the following conditions:
  \begin{itemize}
  \item $\rI$ is an ideal of $\rD$ when $\rI$ and $\rD$ are viewed as merely linear categories.
  \item $\rI$ satisfies the ideal property for identity morphisms; namely for any objects $c$ and morphisms $f \in \rI(a,b)$,
              we have
    \[1_c \otimes f \in \rI(c\otimes a, c \otimes b)\quad \text{ and } \quad f \otimes 1_c \in \rI( a \otimes c, b \otimes c). \]
  \end{itemize}
Then $\rI$ is a tensor ideal of $\rD$.
\end{prop}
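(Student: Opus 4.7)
The plan is to reduce the full tensor-closure property of a tensor ideal to the identity-tensor condition by invoking the interchange law
\[
  (\alpha_1 \otimes \beta_1) \circ (\alpha_2 \otimes \beta_2) = (\alpha_1 \circ \alpha_2) \otimes (\beta_1 \circ \beta_2),
\]
which holds in any strict monoidal category because $\otimes$ is a bifunctor. By hypothesis $\rI$ is already a linear ideal of $\rD$, so what remains is exactly the two closure-under-tensor-product conditions in the definition of a tensor ideal.

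First I would verify that $f \otimes g \in \rI(a \otimes c, b \otimes d)$ whenever $f \in \rI(a,b)$ and $g \in \rD(c,d)$. Using the interchange law, I factor
\[
  f \otimes g = (f \circ 1_a) \otimes (1_d \circ g) = (f \otimes 1_d) \circ (1_a \otimes g).
\]
By the identity-tensor hypothesis applied to $f \in \rI$, the morphism $f \otimes 1_d$ lies in $\rI(a \otimes d, b \otimes d)$, while $1_a \otimes g$ is a morphism in $\rD(a \otimes c, a \otimes d)$. Since $\rI$ is a (two-sided) ideal of the underlying linear category, precomposition keeps us in $\rI$, yielding $f \otimes g \in \rI(a \otimes c, b \otimes d)$.

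The other closure condition, $g \otimes f \in \rI(c \otimes a, d \otimes b)$ for $f \in \rI(a,b)$ and $g \in \rD(c,d)$, is handled symmetrically using the factorization
\[
  g \otimes f = (g \otimes 1_b) \circ (1_c \otimes f),
\]
where $1_c \otimes f \in \rI(c \otimes a, c \otimes b)$ by the identity-tensor hypothesis and $g \otimes 1_b \in \rD(c \otimes b, d \otimes b)$; postcomposition again keeps us in $\rI$ by the linear-ideal property. Together the two conditions show that $\rI$ is a tensor ideal.

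There is no genuine obstacle here; the crux is simply recognizing the interchange-law factorization, after which both conditions are one-line verifications. The only mild subtlety is choosing identity morphisms on the correct tensor factors so that sources and targets match and each factor lies in the intended hom-set, but this is forced once one fixes which side of $\otimes$ the morphism from $\rI$ occupies.
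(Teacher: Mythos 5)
Your proof is correct and follows essentially the same route as the paper: both use the interchange-law factorization $f \otimes g = (f \otimes 1_{\partial_1 g}) \circ (1_{\partial_0 f} \otimes g)$ and then combine the identity-tensor hypothesis with the linear-ideal property. Your write-up just spells out the details (which factor lies in $\rI$ and why composition keeps you there) that the paper's one-line proof leaves implicit.
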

\begin{proof}
Notice that $f\otimes g=(f\otimes 1_{\partial_1 g}) \circ (1_{\partial_0 f} \otimes g)$. Since $\rI$ satisfies the ideal property for identity morphisms, it satisfies the ideal property for any morphisms.
\end{proof}

To give an explicit description of the tensor ideal generated by a set of morphisms, we have the following proposition:

\begin{prop}\label{tenidealstruc}
Let $R$ be a set of morphisms of a linear monoidal category $\rD$. Then $\left<R\right>_\otimes$ consists of all morphisms of the form:
	\begin{equation}\label{equ:tenidealstruc}
    	\sum_{i=1}^n k_i\  f^\prime_i\circ (g^\prime_i \otimes r_i \otimes g_i) \circ f_i,
    \end{equation}
where $n \in \N$, $k_i \in \kk$, $r_i \in R$ and $f_i,f^\prime_i,g_i,g^\prime_i$ are morphisms in $\rD$ such that the above operations are defined.	
\end{prop}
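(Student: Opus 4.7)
The plan is to mirror the proof of \cref{idealstruc}, with the added ingredient that \cref{prop:tenequivdef} will let me avoid a direct verification of the full tensor ideal property. Write $\rI$ for the collection of all morphisms of the form \eqref{equ:tenidealstruc}. I would prove the equality $\rI = \left<R\right>_\otimes$ by establishing the two containments separately.

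For $\left<R\right>_\otimes \subseteq \rI$, I would exhibit $\rI$ as a tensor ideal containing $R$. Closure of each hom-set of $\rI$ under $\kk$-linear combinations is immediate from the shape of \eqref{equ:tenidealstruc}. Closure under pre- and post-composition by arbitrary morphisms $h, h'$ of $\rD$ follows from bilinearity of composition, exactly as in the proof of \cref{idealstruc}: absorbing $h$ into the $f_i$ and $h'$ into the $f'_i$ produces another sum of the same form, so $\rI$ is an ideal when $\rI$ and $\rD$ are viewed as merely linear categories. For the tensor axiom, \cref{prop:tenequivdef} reduces the problem to checking closure under $1_c \otimes -$ and $- \otimes 1_c$. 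Using functoriality of $\otimes$ together with strict associativity of $\otimes$, I compute
\[
1_c \otimes \bigl(f'_i \circ (g'_i \otimes r_i \otimes g_i) \circ f_i\bigr) = (1_c \otimes f'_i) \circ \bigl((1_c \otimes g'_i) \otimes r_i \otimes g_i\bigr) \circ (1_c \otimes f_i),
\]
which is again of the form \eqref{equ:tenidealstruc}; the case of $- \otimes 1_c$ is symmetric. Finally $R \subseteq \rI$ since any $r \in R$ equals $1_{\partial_1 r} \circ (1_\one \otimes r \otimes 1_\one) \circ 1_{\partial_0 r}$, an expression of the form \eqref{equ:tenidealstruc} with $n = 1$, $k_1 = 1$.

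For the reverse containment $\rI \subseteq \left<R\right>_\otimes$, I would argue directly that each summand $f'_i \circ (g'_i \otimes r_i \otimes g_i) \circ f_i$ already lies in $\left<R\right>_\otimes$. Since $r_i \in R \subseteq \left<R\right>_\otimes$, the tensor ideal property applied twice yields $g'_i \otimes r_i \otimes g_i \in \left<R\right>_\otimes$, and then the linear ideal property yields the full composite in $\left<R\right>_\otimes$. Linear combinations of such summands remain in $\left<R\right>_\otimes$ since each of its hom-sets is a $\kk$-submodule.

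The main technical subtlety, as I see it, is the verification of the tensor-ideal axiom for $\rI$: without the reduction supplied by \cref{prop:tenequivdef}, one would need to tensor on both sides with arbitrary morphisms $h, h'$ and re-express the result as a sum of the form \eqref{equ:tenidealstruc}, which requires invoking the interchange law to split a general $h \otimes -$ into its identity-tensor-and-composition components. Routing through \cref{prop:tenequivdef} isolates this step to the identity case, where functoriality of $\otimes$ does the work cleanly.
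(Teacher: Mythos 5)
Your proposal is correct and follows essentially the same route as the paper: both define $\rI$ as the set of morphisms of the form \eqref{equ:tenidealstruc}, reuse the argument of \cref{idealstruc} for the linear ideal property, invoke \cref{prop:tenequivdef} to reduce the tensor axiom to tensoring with identity morphisms (verified by functoriality and re-association of $\otimes$), and obtain the reverse containment by applying the tensor and linear ideal properties of $\left<R\right>_\otimes$ to each summand. The only differences are cosmetic — you tensor with $1_c$ on one side at a time where the paper does $1_a \otimes - \otimes 1_{a'}$ simultaneously, and you make explicit the observation $R \subseteq \rI$ via $r = 1_{\partial_1 r} \circ (1_\one \otimes r \otimes 1_\one) \circ 1_{\partial_0 r}$, which the paper leaves implicit.
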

\begin{proof}
Let $\rI$ denote the subcategory of all morphisms of the form in \cref{equ:tenidealstruc}. It forms an ideal by the same argument as in the proof of \cref{idealstruc}. For any two objects $a,a^\prime$ in $\rD$, since the tensor product is bilinear, we have
  \begin{align*}
		&1_a \otimes \left(\sum_{i=1}^n k_i\  f^\prime_i\circ \left(g^\prime_i \otimes r_i \otimes g_i\right) \circ f_i \right) \otimes 1_a^\prime \\
    =&  \sum_{i=1}^n k_i\  1_a\otimes  \left(f^\prime_i\circ \left(g^\prime_i \otimes r_i \otimes g_i\right) \circ f_i \right) \otimes 1_{a^\prime}\\
    =&   \sum_{i=1}^n k_i\
    \left( 1_{a} \otimes f^\prime_i \otimes 1_{a^\prime} \right)
    \circ
    \left(1_a \otimes \left(g^\prime_i \otimes r_i \otimes g_i\right)  \otimes 1_{a^\prime} \right)
    \circ
    \left(1_{a} \otimes f_i \otimes 1_{a^\prime} \right).
  \end{align*}
Since $\rI$ is a linear subcategory and
	\begin{align*}
    	1_a \otimes \left(g^\prime_i \otimes r_i \otimes g_i \right)  \otimes 1_{a^\prime} = \left( 1_a \otimes g^\prime_i\right) \otimes r_i \otimes \left( g_i  \otimes 1_{a^\prime} \right) \in \Hom(\rI),
   \end{align*}
we have that $\rI$ satisfies the ideal property for identity morphisms. By \cref{prop:tenequivdef}, $\rI$ is a tensor ideal. Since $\rI$ contains $R$, $\rI$ contains $\left<R\right>_\otimes$. On the other hand, since $r_i$ is in $\left<R\right>_\otimes$ and $\left<R\right>_\otimes$ is a tensor ideal, morphisms of the form \cref{equ:tenidealstruc} are in $R$. Thus we have $\rI=R$.
\end{proof}

Since linear monoidal categories are linear categories and any tensor ideal is an ideal by definition, combining \cref{idealstruc} with \cref{tenidealstruc}, we have the following corollary.
\begin{prop}\label{tenidealandidealstruc}
Let $R$ be a set of morphisms of a linear monoidal category $\rD$. Then viewing $\rD$ as a linear category, we have that $\left<R\right>_\otimes$, as an ideal of $\rD$, is generated by
\[
	\tilde{R}= \left\lbrace  1_a \otimes r \otimes 1_b \mid r \in R,\  a,b \in \mathrm{Ob}(\rD) \right\rbrace  ,
\]
namely $\left<R\right>_\otimes=\left<\tilde{R}\right>.$
\end{prop}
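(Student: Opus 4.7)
The plan is to prove the two inclusions $\langle \tilde{R}\rangle \subseteq \langle R\rangle_\otimes$ and $\langle R\rangle_\otimes \subseteq \langle \tilde{R}\rangle$ separately, using the explicit descriptions from \cref{idealstruc,tenidealstruc}.

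For the inclusion $\langle \tilde{R}\rangle \subseteq \langle R\rangle_\otimes$, I would first note that every element $1_a \otimes r \otimes 1_b$ of $\tilde R$ lies in $\langle R\rangle_\otimes$, since $\langle R\rangle_\otimes$ is a tensor ideal containing $R$ and thus absorbs tensor products with identities on both sides. Because $\langle R\rangle_\otimes$ is in particular an ideal in the linear sense, and $\langle \tilde R\rangle$ is the smallest such ideal containing $\tilde R$, the inclusion follows.

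For the reverse inclusion, I would take a typical summand appearing in \cref{tenidealstruc}, namely $f' \circ (g' \otimes r \otimes g) \circ f$ with $r \colon x \to y$, $g \colon a \to b$, and $g' \colon c \to d$, and express the middle factor as a composition sandwiching one of the padded generators from $\tilde R$. Concretely, I would use the interchange identity
\[
    g' \otimes r \otimes g
    = (g' \otimes 1_y \otimes 1_b) \circ (1_c \otimes r \otimes 1_b) \circ (1_c \otimes 1_x \otimes g),
\]
so that $f' \circ (g'\otimes r \otimes g)\circ f$ becomes $F' \circ (1_c \otimes r \otimes 1_b) \circ F$ for suitable morphisms $F, F'$ in $\rD$. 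Since $1_c \otimes r \otimes 1_b \in \tilde R$, this expression is of the form appearing in \cref{equ:idealstruc}, hence lies in $\langle \tilde R\rangle$. Taking $\kk$-linear combinations and invoking \cref{tenidealstruc} then shows $\langle R\rangle_\otimes \subseteq \langle \tilde R\rangle$.

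The only real subtlety I anticipate is bookkeeping with the objects and arrows in the interchange decomposition to make sure the composition is defined and lands in the correct hom-set; once that identity is recorded, both containments are essentially immediate from the two previous structural propositions. No further obstacle is expected.
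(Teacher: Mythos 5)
Your proposal is correct and follows essentially the same route as the paper: both directions rest on the interchange decomposition $g' \otimes r \otimes g = (g' \otimes 1_y \otimes 1_b) \circ (1_c \otimes r \otimes 1_b) \circ (1_c \otimes 1_x \otimes g)$ applied to the general element of $\left<R\right>_\otimes$ from \cref{tenidealstruc}, together with the minimality of $\left<\tilde{R}\right>$ for the easy containment. Your bookkeeping of sources and targets in that identity is in fact more careful than the paper's, whose object labels contain apparent typos.
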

\begin{proof}
By the structure theorem of ideals and tensor ideals, namely \cref{tenidealstruc} and \cref{idealstruc} , we have $\left<R\right>_\otimes$, as an ideal, is generated by
\[
	\left\lbrace  g^\prime \otimes r \otimes g \mid r \in R,\  g,g^\prime \in \Hom(\rD) \right\rbrace  .
\]
At the same time we have
\[
    g^\prime \otimes r \otimes g = (g^\prime \otimes 1_{x^\prime} \otimes 1_{b^\prime}) \circ (1_a \otimes r \otimes 1_b) \circ ( 1_a \otimes 1_x \otimes g) \in \left<\tilde{R}\right>,
\]
where $r \colon x \to x^\prime$, $g \colon a \to a^\prime$ and $g^\prime \colon b \to b^\prime.$ Thus we have $\left<R\right>_\otimes \subseteq \left<\tilde{R}\right>$. Conversely, $1_a \otimes r \otimes 1_b \in \left<R\right>_\otimes$ implies $\left<\tilde{R}\right> \subseteq \left<R\right>_\otimes$, and so we have $\left<R\right>_\otimes = \left<\tilde{R}\right>$.
\end{proof}

\color{black}

\begin{defin}
	The \emph{quotient category} $\rD/\rI$ of a linear monoidal category $\rD$ by a tensor ideal $\rI$ is the quotient category of $\rD$ by $\rI$ viewed respectively as a linear category and an ideal, equipped with a tensor product defined by
    \begin{align*}
    	[f] \otimes [g] = [f \otimes g]
    \end{align*}
for $f,g$ morphisms in $\rD$.
\end{defin}
The definition of ideals of linear categories guarantees that the tensor product of cosets are well-defined: let $[f]=[f^\prime]$, $[g]=[g^\prime]$. Then we have $f - f^\prime \in \Hom(\rI)$, $g- g ^\prime \in \Hom(\rI)$. Thus
	\begin{align*}
		f \otimes g - f^\prime \otimes g^\prime
       = f \otimes g - f^\prime \otimes g + f^\prime \otimes g - f^\prime \otimes g^\prime
       = (f - f^\prime) \otimes g - f^\prime \otimes (g -g^\prime) \in \Hom(\rI).
	\end{align*}
Thus we have $ [f \otimes g] = [f ^\prime \otimes g^\prime]$.

For a set $X$, let $M(X)$ denote the free monoid generated by $X$. Now we define presentations of linear monoidal categories.
\begin{defin}
A \emph{presentation} of a linear monoidal category $\rD$ is a triple $(X,E,R)_\otimes$, where $R$ is a set of morphisms of the free linear monoidal category $\tM Q$ over the monoidal quiver $Q=(M(X),E)$ such that $\rD$ is isomorphic to the quotient category of $\tM Q$ by the tensor ideal generated by $R$, namely the linear monoidal category $\tM Q / \left<R\right>_\otimes$.
\end{defin}

%========================
\section{Construction of the free linear monoidal category} \label{sec:LinMonCatExist}
%========================
In this section, we prove the existence of the free (strict) linear monoidal category over a monoidal quiver by giving an explicit construction. We follow the work of \cite[\S 5]{Pie17}.

%------------------------
\subsection{Step one}
%------------------------
Let $V$ be a monoid where the product is denoted by concatenation and let $Q=(V,E)$ be a quiver on $V$. We define a new quiver $\bar{Q}=(V,\bar{E})$, where the set of edges $\bar{E}$ consists of all triples
  \[	
  		(v,e,w), \qquad v,w \in V,\  e \in E.
  \]
For an edge $e \colon x \to y$, we define
  \[	
  		\partial_0(v,e,w)=vxw,\qquad \partial_1 (v,e,w) = vyw.
  \]
Now take $\rM^\prime$ to be the free linear category over $\bar{Q}$, namely $\rM^\prime = \tL \bar{Q}$. Then all morphisms in $\rM^\prime$ are linear combinations of paths on $\bar{Q}$. %\textcolor{red}{(definition of paths haven't been formulated in section 2 yet; definition should include trivial path, and denoted as $1_a$)}.
We now define a product $\otimes$ on both objects and morphisms of $\rM^\prime$ inductively.
\begin{itemize}
    \item For two objects $a$ and $b$ of $\rM^\prime$, we define
    \[
        a \otimes b = ab, \quad 1_a \otimes 1_b = 1_{ab}.
    \]
    \item For a triple $(v,e,w)$ and a path $f=\alpha_1 \circ \dotsb \circ \alpha_n$ where $\alpha_i=(v_i,e_i,w_i),i=1,...,n$ are edges, we define
    \begin{align}
    \begin{split}
        1_a \otimes (v,e,w) = (av,e,w)\quad \text{and}\quad
        1_a \otimes f= (1_a \otimes \alpha_1) \circ \dotsb \circ (1_a \otimes \alpha_n),\\
          (v,e,w) \otimes 1_a = (v,e,w a)\quad \text{and}\quad
       f \otimes  1_a = ( \alpha_1 \otimes 1_a) \circ \dotsb \circ ( \alpha_n \otimes 1_a).\\
    \end{split}
    \end{align}
    \item For two paths $f \colon a \to b$ and $g \colon c \to d$, we define
    \begin{align} \label{productreduce}
    \begin{split}
        f \otimes g = (f \otimes 1_d) \circ (1_a \otimes g)
%        f \otimes g &= (\alpha_1 \otimes 1_{v^\prime_1 y^\prime_1 w^\prime_1}) \circ  (\alpha_2 \otimes 1_{v^\prime_1 y^\prime_1 w^\prime_1}) \circ \dotsb \circ (\alpha_n \otimes 1_{v^\prime_1 y^\prime_1 w^\prime_1})\\
%        &\circ (1_{v_nx_nw_n} \otimes \beta_1) \circ (1_{v_nx_nw_n} \otimes \beta_2) \dotsb (1_{v_nx_nw_n} \otimes \beta_m).
    \end{split}
    \end{align}
   (From now on we adopt the convention that the product $\otimes$ will be perfomed before the composition $\circ$ to avoid unnecessary parentheses.) In particular, for two edges $(v,e,w)$ and $(v^\prime,e^\prime,w^\prime)$, where $e \colon x \to y$ and $e^\prime \colon x^\prime \to y^\prime$, we have that
    \begin{align}
    \begin{split}
        (v,e,w) \otimes (v^\prime,e^\prime,w^\prime)
        =&(v,e,w) \otimes 1_{v^\prime y^\prime w^\prime} \circ 1_{vxw} \otimes (v^\prime,e^\prime,w^\prime)\\
        =&(v,e,wv^\prime y^\prime w^\prime) \circ (vxwv^\prime,e^\prime,w^\prime).
    \end{split}
    \end{align}
    \item We extend $\otimes$ to all morphisms of $\rM^\prime$ by bilinearity.
\end{itemize}
%(For arbitrary morphisms, we also have \cref{productreduce} holds.)
It's easy to check that $\otimes$ is associative and the identity $\one$ of the monoid $M(X)$ and the identity morphisms $1_{\one}$ on it  are the identities of objects and morphisms of the product, respectively. We may be attempted to think $(M^\prime,\otimes)$ is the free linear monoidal category. However, it is not, since the interchange law doesn't hold. In particular, the interchange law for triples fails by definition:
\begin{align}
    (v,e,wv^\prime y^\prime w^\prime) \circ (vxwv^\prime,e^\prime,w^\prime) \neq
     (vywv^\prime,e^\prime,w^\prime) \circ  (v,e,wv^\prime x^\prime w^\prime).
\end{align}

Thus we are motivated to mod out the differences of the two sides of the above expression.

%-----------------------------------
\subsection{Step two} \label{sec:freelinmoncatrel}
%-----------------------------------

Let $C$ be %the differences of all morphisms of the form at the end of last subsection, namely
all morphisms of the following form
\begin{equation}\label{interchangelawoftriples}
        (v,e,wv^\prime y^\prime w^\prime) \circ (vxwv^\prime,e^\prime,w^\prime) -
     (vywv^\prime,e^\prime,w^\prime) \circ  (v,e,wv^\prime x^\prime w^\prime),
\end{equation}
where $v,w,v^\prime, w^\prime \in \mathrm{Ob}(M)$ and $e \colon x \to y, e^\prime \colon x^\prime \to y^\prime \in E$.

Let $\rM$ be the quotient category of $\rM^\prime$ by the ideal generated by $C$, namely $\rM=\rM^\prime/\left<C\right>$. Then $\otimes$ induces a product on $\rM$, still denoted by $\otimes$.

Before we show that $(\rM,\otimes)$ actually forms a linear monoidal category, we need to show that $\otimes$ is well defined on $\rM$. Namely, for a morphism $f$ in $\left<C\right>$ and an arbitrary morphism $g$ we have that $f \otimes g \in \left<C\right>$ and $g \otimes f \in \left<C\right>$.

By \cref{idealstruc}, morphisms in $\left<C\right>$ are of the form
\[
     f=\sum_{i=1}^n k_i\  p_i^\prime \circ r_i \circ p_i,
    % \alpha \otimes 1_{y^\prime} \circ 1_x \otimes \alpha^\prime -  1_{y} \otimes \alpha^\prime \circ \alpha \otimes 1_x
    % where $\alpha \colon x \to y,\alpha^\prime \colon x^\prime \to y^\prime$ are edges in $\bar{E}$
\]
where $k_i \in \kk$, $r_i \in C$ and $p_i,p_i^\prime$ are paths in $\rM^\prime$. Let $a$ and $b$ be the domain and codomain of $f$, respectively. Since for an arbitrary path $g\colon c \to d$ in $\rM^\prime$ we have
\begin{align}
\begin{split}
    f \otimes g &= f \otimes 1_d \circ 1_a \otimes g\\
                &= \sum_{i=1}^n k_i\  (p_i^\prime \circ r_i \circ p_i) \otimes 1_d \circ  1_a \otimes g\\
                &= \sum_{i=1}^n k_i\  p_i^\prime \otimes 1_d \circ r_i \otimes 1_d \circ p_i \otimes 1_d \circ 1_a \otimes g,
\end{split}
\end{align}
to show $f\otimes g$ is in $\left<C\right>$, it suffices to show that morphisms of the form $r \otimes 1_d$ are in $\left<C\right>$.
By the definition of $C$, we have $$r=\alpha \otimes 1_{y^\prime} \circ 1_{x} \otimes \alpha^\prime -
1_{y} \otimes \alpha^\prime \circ \alpha \otimes 1_{x^\prime} ,$$ for some triples $\alpha \colon x \to y$, $\alpha^\prime \colon x^\prime \to y^\prime$. Let $\beta^\prime=\alpha^\prime \otimes 1_d \colon x^\prime d \to y^\prime d$. Then we have
\begin{align}
\begin{split}
    r \otimes 1_d &=
      \alpha \otimes 1_{y^\prime} \otimes 1_d \circ 1_{x} \otimes \alpha^\prime \otimes 1_d -
1_{y} \otimes \alpha^\prime \otimes 1_d \circ \alpha \otimes 1_{x^\prime} \otimes 1_d\\
    &= \alpha \otimes 1_{y^\prime d} \circ 1_x \otimes \beta^\prime - 1_y \otimes \beta^\prime \circ \alpha \otimes 1_{x^\prime d} \in C.
\end{split}
\end{align}
Thus we have $f \otimes g \in \left<C\right>$, and similar arguements yield that $g \otimes f$ is also in $\left<C\right>$.
Thus $\otimes$ is well-defined on $\rM$. It follows directly that  $\otimes$ on $\rM$ is bilinear and associative since $\otimes$ on $\rM^\prime$ is bilinear and associative.

 Morphisms in $\rM$ are cosets of morphisms in $\rM^\prime$, denoted by $[f]$, where $f \in \Hom(\rM^\prime)$. We will omit the brackets for identity morphisms and triples to simplify our notation.

 Since we define $\rM$ by modding out morphisms of the form \cref{interchangelawoftriples}, in $\rM$ we already have the interchange law for triples:
 \begin{align}
    (v,e,wv^\prime y^\prime w^\prime) \circ (vxwv^\prime,e^\prime,w^\prime) =
     (vywv^\prime,e^\prime,w^\prime) \circ  (v,e,wv^\prime x^\prime w^\prime).
\end{align}

It quickly follows that the interchange law for arbitrary morphisms also holds, as we now explain. Since $\otimes$ is bilinear, it suffices to check the interchange law for arbitrary paths. For two paths $f=\alpha_1 \circ \dotsb \alpha_n$ and $g=\beta_1 \circ \dotsb
\circ \beta_m$ where $\alpha_i: x_i \to y_i $ and $\beta_j: z_i \to w_i$ are triples, we have that
\[
    [f] \otimes 1_{w_1} \circ 1_{x_n} \otimes [g]  = (\alpha_1 \otimes 1_{w_1}) \circ  \dotsb \circ (\alpha_n \otimes 1_{w_1}) \circ (1_{x_n} \otimes \beta_1) \circ \dotsb \circ (1_{x_n} \otimes \beta_m).
\]
By using the interchange law for triples repeatedly we can move all morphisms of the form $1_{x_i} \otimes \beta_j$ to the left. After that, the equation above reads exactly $1_{y_1} \otimes [g] \circ [f] \otimes 1_{z_m}$.
Thus the interchange law for arbitrary tensor products holds. It follows that $\otimes$ is a bifunctor since for $f_i \colon a_i \to b_i,i=1,2,$ and $g_i \colon c_i \to d_i,i=1,2$, we have
\begin{align}
\begin{split}
    ([f_1] \circ [f_2])\otimes( [g_1] \circ [g_2])
    &= ([f_1] \circ [f_2]) \otimes 1_{d_1} \circ 1_{a_2} \otimes([g_1] \circ [g_2])\\
    &= [f_1] \otimes 1_{d_1} \circ [f_2] \otimes 1_{d_1} \circ 1_{a_2} \otimes [g_1] \circ 1_{a_2} \otimes [g_2]\\
    &\stackrel{\star}{=} [f_1] \otimes 1_{d_1} \circ 1_{b_2} \otimes [g_1] \circ [f_2] \otimes 1_{z_1} \circ 1_{a_2} \otimes [g_2] \\
    &= [f_1] \otimes 1_{d_1} \circ 1_{a_1} \otimes [g_1] \circ [f_2] \otimes 1_{w_2} \circ 1_{a_2} \otimes [g_2] \\
    &= [f_1] \otimes [g_1] \circ [f_2] \otimes [g_2],
\end{split}
\end{align}
where $\star$ is due to the interchange law.

Now $(\rM,\otimes)$ does form a linear monoidal category.
\color{black}	
%------------------
\subsection{Step three: universal property}
%-----------------

Now we claim that $(\rM,\otimes)$ satisfies the universal property in \cref{def:freelinmoncat} and therefore it is the free linear monoidal category. We denote by $\tau=(\tau_0,\tau_1)$ the natural inclusion from the base monoidal quiver $Q=(V,E)$ to $U\rM$, where $U$ is the natural forgetful functor, $\tau_0$ is identity on $V$ and $\tau_1$ maps $e$ to $(\one,e,\one)$.
\begin{theo}[Universal property]
The category $\rM$ constructed above is the free linear monoidal category over $Q$, as defined in \cref{def:freelinmoncat}.
\end{theo}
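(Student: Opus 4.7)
The plan is to build $\phi'$ in stages, using the universal property of the free linear category already proved, then cutting down by the interchange relations. Throughout, let $\phi = (\phi_0,\phi_1) \colon Q \to U\rD$ be the given monoidal quiver morphism.

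First, I would extend $\phi$ to a quiver morphism $\bar\phi \colon \bar Q \to U'\rD$ (where $U'$ forgets the linear monoidal structure down to a plain quiver) by setting
\[
\bar\phi_0(v) = \phi_0(v), \qquad \bar\phi_1\bigl((v,e,w)\bigr) = 1_{\phi_0(v)} \otimes \phi_1(e) \otimes 1_{\phi_0(w)}.
\]
Since $\phi_0$ is a monoid morphism, source/target compatibility is immediate: an edge $e \colon x \to y$ yields $(v,e,w)\colon vxw \to vyw$ in $\bar Q$, and the image runs from $\phi_0(v)\phi_0(x)\phi_0(w)=\phi_0(vxw)$ to $\phi_0(vyw)$. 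Applying the universal property of $\rM' = \tL\bar Q$ produces a unique linear functor $\tilde\phi \colon \rM' \to \rD$ extending $\bar\phi$.

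Next I would verify that $\tilde\phi$ kills every element of $C$, so it descends to a linear functor $\phi' \colon \rM \to \rD$ on the quotient $\rM = \rM'/\langle C\rangle$. A typical generator of $C$ has the form
\[
(v,e,wv'y'w')\circ(vxwv',e',w') \; - \; (vywv',e',w')\circ(v,e,wv'x'w'),
\]
and applying $\tilde\phi$ to each summand gives two composites of the form $(1\otimes\phi_1(e)\otimes 1)\circ(1\otimes\phi_1(e')\otimes 1)$ in $\rD$. These two composites are equal by the interchange law in the (honest) linear monoidal category $\rD$, so their difference is zero. Since $\ker\tilde\phi$ is an ideal of $\rM'$, it contains $\langle C\rangle$, giving the factorization $\phi'$.

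Then I would check that $\phi'$ is a linear monoidal functor, i.e.\ that $\phi'([f]\otimes[g]) = \phi'([f])\otimes\phi'([g])$. On objects this is just the monoid morphism property of $\phi_0$. For morphisms, I would argue by the way $\otimes$ was built in Step one: by bilinearity it suffices to treat paths; by the reduction $f\otimes g = (f\otimes 1_d)\circ(1_a\otimes g)$ and functoriality it suffices to check $\tilde\phi(1_a\otimes\alpha) = 1_{\phi_0(a)}\otimes\tilde\phi(\alpha)$ and $\tilde\phi(\alpha\otimes 1_a) = \tilde\phi(\alpha)\otimes 1_{\phi_0(a)}$ for a single triple $\alpha = (v,e,w)$; and this is immediate from the definitions, since $1_a\otimes(v,e,w) = (av,e,w)$ and $\bar\phi_1((av,e,w)) = 1_{\phi_0(a)}\otimes 1_{\phi_0(v)}\otimes\phi_1(e)\otimes 1_{\phi_0(w)}$. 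The identity object and identity arrow $\one, 1_{\one}$ are handled the same way. Finally, $U\phi'\circ\tau = \phi$ holds by construction: on a generating edge $e$, $\tau_1(e) = (\one,e,\one)$ and $\phi'((\one,e,\one)) = 1_{\phi_0(\one)}\otimes\phi_1(e)\otimes 1_{\phi_0(\one)} = \phi_1(e)$.

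For uniqueness, suppose $\psi \colon \rM \to \rD$ is any linear monoidal functor with $U\psi\circ\tau=\phi$. On objects, $\psi$ agrees with $\phi_0$ on generators of the free monoid $V$, hence everywhere by monoidality. On morphisms, $\psi$ is determined on each triple $(v,e,w) = 1_v\otimes\tau_1(e)\otimes 1_w$ by monoidality and the equation on $\tau_1(e)$, and then determined on all of $\rM$ by linearity and functoriality. Thus $\psi = \phi'$.

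The main obstacle I anticipate is the monoidality check in the third paragraph: one must be careful that the inductive reduction on path length, which uses the identity $f\otimes g = (f\otimes 1)\circ(1\otimes g)$ in $\rM$, is compatible with the analogous identity in $\rD$; but this compatibility is exactly what the interchange law in $\rD$ and the case of single triples provide, so no new relations beyond $C$ are needed.
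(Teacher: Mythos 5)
Your proposal is correct and follows essentially the same strategy as the paper: send each triple $(v,e,w)$ to $1_{\phi_0(v)}\otimes\phi_1(e)\otimes 1_{\phi_0(w)}$, extend by composition and linearity, and observe that the interchange law in $\rD$ annihilates every generator of $C$, so the functor descends to the quotient $\rM=\rM'/\langle C\rangle$. The one organizational difference is that you obtain the functor on $\rM'$ by invoking the universal property of the free linear category $\tL\bar{Q}$ (proved earlier in the paper), whereas the paper writes down the formula directly on chosen representatives and then verifies by hand that it vanishes on $\langle C\rangle$; your route makes well-definedness on $\rM'$ automatic and is slightly cleaner. You also spell out the monoidality check and the uniqueness argument, both of which the paper dismisses with ``it's easy to show that $\phi'$ is indeed a linear monoidal functor'' and does not address explicitly --- so your version is, if anything, more complete than the one in the text.
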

\begin{proof}
To prove that $\rM$ is the free linear monoidal category, we should show that for any linear monoidal category $\rD$ and monoidal quiver morphism $\phi$ from $Q$ to $U\rD$, where U is the forgetful functor from $\LMCat$ to $\MQuiv$, there exists a unique linear monoidal functor $\phi^\prime \colon \rM \to \rD$ such that $\phi=U\phi^\prime \circ \tau$, as the diagram below indicates:
    \begin{equation}
   		\begin{tikzcd}
        	& 			&&Q\arrow{rd}{\phi}\arrow{d}{\tau}&\\
            \rM\arrow{r}{\exists! \phi^\prime}&\rD,			&&U\rM\arrow[swap,dashrightarrow]{r}{U\phi^\prime}&U\rD.
        \end{tikzcd}
   \end{equation}

Define a linear monoidal functor $\phi^\prime$ in the following way: for $[f] = \sum_{i=1}^n k_i [f_i]$, where $f_i$ are paths and $[f_i] = \alpha^i_{1} \circ \dotsb \circ \alpha^i_{n_i}$, with $\alpha^i_{j}=(v^i_j,e^i_j,w^i_j)=1_{v^i_j} \otimes (\one,e^i_j,\one) \otimes 1_{w^i_j},$ we define $\phi^\prime([f])$ as follows:
	\begin{align*}
    \begin{split}
    	\phi^\prime([f]) &= \sum_{i=1}^n k_i\  \phi^\prime([f_i]),\\
         \phi^\prime([f_i])  &=\phi^\prime(\alpha^i_1) \circ \dotsb \circ \phi^\prime(\alpha^i_{n_i}),\\
          \phi^\prime(\alpha^i_j) &=\phi^\prime(1_{v^i_j}) \otimes \phi^\prime(\one,e^i_j,\one) \otimes \phi^\prime(1_{w^i_j}),\\
          \phi^\prime(1_{v^i_j})&=1_{\phi(v^i_j)},\quad  \phi^\prime (\one,e^i_j,\one) = \phi(e^i_j).
    \end{split}
   	\end{align*}
To show $\phi^\prime$ is well-defined, it suffices to show for any morphism $[f]=[0]$, we have $\phi^\prime([f])=0$.
By the choice of $f$ we have $f \in \left<C\right>$, where $C$ is defined in \cref{interchangelawoftriples}, and thus there exists some $g_i,g^\prime_i,r_i \in \rM$ and $k_i \in \kk$ such that $[f]=\sum_{i=1}^n k_i(g^\prime_i \circ r_i \circ g_i)$. By the definition of $\phi^\prime$, we have
	\begin{equation}\label{phiwelldefined}
	 	\phi^\prime([f])=\sum_{i=1}^n k_i \phi^\prime([g^\prime_i]) \circ \phi^\prime([r_i]) \circ \phi^\prime([g_i]).
	\end{equation}
Consider $\phi^\prime(r)$ for any $r \in C$. There exists $(v,e,w)$ and $(v^\prime,e^\prime,w^\prime)$, where $e \colon x \to y$ and $e^\prime \colon x^\prime \to y^\prime$ such that
\[
    r=(v,e,w v^\prime y^\prime w^\prime) \circ (vxw v^\prime,e,w^\prime) - (vyw v^\prime,e,w^\prime) \circ  (v,e,w v^\prime x^\prime w^\prime).
\]
By the definition of $\phi^\prime$, we have
\begin{align*}
    \phi^\prime([r])=&\phi^\prime(v,e,w v^\prime y^\prime w^\prime) \circ \phi^\prime(vxw v^\prime,e,w^\prime) -
    \phi^\prime(vyw v^\prime,e,w^\prime) \circ  \phi^\prime(v,e,w v^\prime x^\prime w^\prime)\\
    =& 1_{\phi(v)} \otimes \phi(e) \otimes 1_{\phi(w v^\prime y^\prime w^\prime)} \circ 1_{\phi(vxw v^\prime)} \otimes \phi(e^\prime) \otimes 1_{\phi(w^\prime)}  \\
    &-  1_{\phi(vyw v^\prime)} \otimes \phi(e^\prime) \otimes 1_{\phi(w^\prime)} \circ 1_{\phi(v)} \otimes \phi(e) \otimes 1_{\phi(w v^\prime x^\prime w^\prime)}
\end{align*}
Thus we have $\phi^\prime([r])=0$ by the interchange law in $\rD$.
With the well-definedness established, it's easy to show that $\phi^\prime$ is indeed a linear monoidal functor.
\end{proof}

Therefore, $\rM= \tM Q$, the free linear monoidal category over a monoidal quiver, always exists. It follows that one can always talk about presentations of a linear monoidal category.

%=====================
\section{Presentation of linear monoidal categories as linear categories}\label{sec:MainTheorem}
%=====================
In this section, we will give our main results: given a presentation of a linear monoidal category, we can produce a presentation of it as a linear category, and when all generating morphisms are endomorphisms, we can produce presentations of its endomorphism algebras. These results are in \cref{thm:presentationofalgebra,MainTheorem} and the former one is stated in \cite[\S 2.6]{BCNR17} without proof.

Before we get to the main theorem, we need the following lemma.
  \begin{lem}[The third isomorphism theorem]\label{ThirdIso} Let $\rL$ be a linear category and $\rI$, $\rJ$ be two ideals of $\rL$ such that $\rI$ is a subcategory of $\rJ$. Then $\rJ/\rI$ is also an ideal of $\rL/\rI$ and we have
    \begin{equation*}
        \rL/\rJ \cong (\rL/\rI)/(\rJ/\rI),
    \end{equation*}
where $\cong$ means isomorphic as linear categories.
\end{lem}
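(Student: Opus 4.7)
The plan is to first verify that $\rJ/\rI$ is a well-defined ideal of $\rL/\rI$, and then construct the isomorphism explicitly by descending the composite of two canonical quotient functors.

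Since $\rI$ is a subcategory of $\rJ$, we have $\rI(a,b) \subseteq \rJ(a,b) \subseteq \rL(a,b)$ for every pair of objects $a,b$. Hence the quotient $\kk$-module $\rJ(a,b)/\rI(a,b)$ embeds naturally into $\rL(a,b)/\rI(a,b) = (\rL/\rI)(a,b)$, and these hom-sets assemble into a linear subcategory $\rJ/\rI$ of $\rL/\rI$ with the same objects. To verify the ideal property, take a representative $f \in \rJ(a,b)$ of a class $[f]_\rI \in (\rJ/\rI)(a,b)$ and any morphism $[g]_\rI \in (\rL/\rI)(b,c)$ with $g \in \rL(b,c)$. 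Since $\rJ$ is an ideal of $\rL$, we have $g \circ f \in \rJ(a,c)$, and hence $[g]_\rI \circ [f]_\rI = [g \circ f]_\rI \in (\rJ/\rI)(a,c)$; the other side is symmetric.

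Next, I would define a linear functor $F \colon \rL/\rJ \to (\rL/\rI)/(\rJ/\rI)$ which is the identity on objects and sends $[f]_\rJ \mapsto [[f]_\rI]_{\rJ/\rI}$ on morphisms. Well-definedness follows because if $[f_1]_\rJ = [f_2]_\rJ$ then $f_1 - f_2 \in \rJ(a,b)$, so $[f_1 - f_2]_\rI \in (\rJ/\rI)(a,b)$, giving the required equality of double cosets. Functoriality and linearity follow immediately from the corresponding properties of the two canonical quotient functors $\rL \to \rL/\rI$ and $\rL/\rI \to (\rL/\rI)/(\rJ/\rI)$, whose composition $F$ factors.

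Finally, I would verify $F$ is an isomorphism by checking bijectivity on hom-sets. Surjectivity is immediate: any element of $((\rL/\rI)/(\rJ/\rI))(a,b)$ has a representative of the form $[h]_\rI$ with $h \in \rL(a,b)$, and this element is $F([h]_\rJ)$. For injectivity, suppose $F([f]_\rJ) = 0$, so $[f]_\rI \in (\rJ/\rI)(a,b)$. By the very definition of $\rJ/\rI$, this means $[f]_\rI = [f']_\rI$ for some $f' \in \rJ(a,b)$, whence $f - f' \in \rI(a,b) \subseteq \rJ(a,b)$ and therefore $f \in \rJ(a,b)$, giving $[f]_\rJ = 0$. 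The only point requiring care is keeping the two nested layers of cosets straight, but each verification reduces to a routine manipulation once $\rJ/\rI$ has been set up as an ideal; no genuinely hard step is expected.
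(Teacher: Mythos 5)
Your proof is correct and follows essentially the same route as the paper: both reduce the claim to hom-set-wise applications of the third isomorphism theorem for $\kk$-modules and then check that the resulting family of module isomorphisms is functorial (you build the functor $\rL/\rJ \to (\rL/\rI)/(\rJ/\rI)$ and check bijectivity by hand, while the paper writes down the inverse map directly and cites the module-level theorem). Your version is slightly more complete in that you explicitly verify that $\rJ/\rI$ is an ideal of $\rL/\rI$, a part of the statement the paper's proof passes over.
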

\begin{proof}
Consider a family of module morphisms $P_{a,b}$, $a,b \in \mathrm{Ob}(\rL)$: for each $f \in \rL(a,b)$, $P_{a,b}$ sends $[f]=f + \rI(a,b)$ to $f+ \rJ(a,b)$. It is well defined since $\rI$ is contained in $\rJ$. By the third isomorphism theorem of modules, we have there exists module isomorphism $\bar{P}_{a,b}$ from
\[ {( \rL/ \rI)/( \rJ/ \rI)}(a,b)=({ \rL/ \rI}(a,b))/({ \rJ/ \rI}(a,b))\]
to $\rL/\rJ(a,b)$. In particular, $\bar{P}_{a,b}$, sending each $[f]+\rJ/\rI(a,b)$ to $f + \rJ(a,b)$, is well-defined. Thus we can define a linear functor $\bar{P}$ induced by $\bar{P}_{a,b},\ a,b \in \mathrm{Ob}(\rL)$. The functor $\bar{P}$ does form a functor since we have
\begin{align*}
    P ([g] + \rJ/\rI(b,c)) ([f] +\rJ/\rI(a,b))= P([g \circ f] + \rJ/\rI(a,c))
    = g \circ f + \rJ(a,c) \\= (g + \rJ(b,c))(f+\rJ(a,b)) = P([g] + \rJ(b,c)) \circ P( [f] +\rJ(a,b)).
\end{align*}
Since $\bar{P}_{a,b}$ are linear isomorphisms, $\bar{P}$ is an isomorphism of linear categories.
\end{proof}

Now we reformulate and prove the theorem stated in \cite[\S 2.6]{Bru14} without proof.
%**********************************
\begin{theo}\label{MainTheorem}
  If a linear monoidal category $\rM$ has a presentation $(X,E,R)_\otimes$ as a linear monoidal category, then it has a presentation $(M(X),\bar{E},\bar{R})$ as a linear category, where $M(X),\bar{E},\bar{R}$ are defined as follows.
  \begin{itemize}
  \item $M(X)$ is the free monoid generated by $X$, where the product is denoted by concatenation.
  \item The set of generating morphisms $\bar{E}$ is
    \begin{align*}
      \bar{E}=\left\lbrace (v,e,w) \mid v,w \in M(X),\ e \in E \right\rbrace.
    \end{align*}
  \item The set of relations $\bar{R}$ is $C \cup R^\prime$, where $C$ and $R^\prime$ are defined as follows.
  \begin{itemize}
      \item The set $C$ is the relation of the interchange law. To be precise, $C$ consists of all morphisms of the form
\[
        (v,e,wv^\prime y^\prime w^\prime) \circ (vxwv^\prime,e^\prime,w^\prime) -
     (vywv^\prime,e^\prime,w^\prime) \circ  (v,e,wv^\prime x^\prime w^\prime),
\]
    where $v,v^\prime,w,w^\prime$ are objects and $e \colon x \to y, e^\prime \colon x^\prime \to y^\prime$ are edges in $E$.
  \item The set $R^\prime$ is constructed in the following way: for each relation $r$ in $R$ written as
      \begin{align*}
          [r]=\sum_{i=1}^n k_i [f_i],\quad
          [f_i]=\alpha^i_1 \circ \dotsb \circ \alpha^i_{n_i},\quad
          \alpha^i_j=(v^i_j, e^i_j,w^i_j),
      \end{align*}
  where $n$, $n_i \in \N$, $e^i_j \in E$,
  let $R^\prime$ include $r_{a,b}$, for $a$, $b \in M(X)$, where
      \begin{align*}
          r_{a,b}&= \sum_{i=1}^n k_i (f_i)_{a,b},\\
          (f_i)_{a,b}&=(\alpha^i_1)_{a,b} \circ \dotsb \circ (\alpha^i_{n_i})_{a,b},\\
          (\alpha^i_j)_{a,b}&=(a  v^i_j, e^i_j, w^i_j  b).
      \end{align*}
  \end{itemize}
  \end{itemize}
\end{theo}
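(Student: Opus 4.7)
The plan is to identify $\tL \bar{Q} / \langle \bar{R} \rangle$ with the given linear monoidal presentation of $\rM$ by means of the third isomorphism theorem (\cref{ThirdIso}). Recall from \cref{sec:LinMonCatExist} that the free linear monoidal category is constructed as $\tM Q = \rM^\prime / \langle C \rangle$, where $\rM^\prime = \tL \bar{Q}$ is the free linear category over the quiver $\bar{Q} = (M(X), \bar{E})$. Since $\bar{R} = C \cup R^\prime$, the inclusion $\langle C \rangle \subseteq \langle \bar{R} \rangle$ is automatic, so \cref{ThirdIso} applies and yields
\[
    \tL \bar{Q} / \langle \bar{R} \rangle \;\cong\; (\tL \bar{Q} / \langle C \rangle) / (\langle \bar{R} \rangle / \langle C \rangle) \;=\; \tM Q / (\langle \bar{R} \rangle / \langle C \rangle).
\]
The proof thus reduces to showing $\langle \bar{R} \rangle / \langle C \rangle = \langle R \rangle_\otimes$ as ideals of $\tM Q$.

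To compare these two ideals, I would first note that $\langle \bar{R} \rangle / \langle C \rangle$ is generated as an ideal of $\tM Q$ by the images of $\bar{R} = C \cup R^\prime$. The images of elements of $C$ vanish, so this generating set reduces to the images of the $r_{a,b}$. Unwinding the inductive definition of $\otimes$ on $\rM^\prime$ from \cref{sec:LinMonCatExist}, each path $(f_i)_{a,b}$ equals $1_a \otimes f_i \otimes 1_b$ in $\rM^\prime$, whence the image of $r_{a,b}$ in $\tM Q$ is $1_a \otimes r \otimes 1_b$. A small side check is required, namely that $r_{a,b}$ does not depend on the choice of expression $[r] = \sum_i k_i [f_i]$: two such choices differ by an element of $\langle C \rangle$ in $\rM^\prime$, and the stability of $\langle C \rangle$ under tensoring with identity morphisms (verified in \cref{sec:freelinmoncatrel}) ensures that $r_{a,b}$ is well-defined modulo $\langle C \rangle$. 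Now \cref{tenidealandidealstruc} identifies $\langle R \rangle_\otimes$ precisely as the ordinary ideal of $\tM Q$ generated by $\{\,1_a \otimes r \otimes 1_b \mid r \in R,\ a,b \in M(X)\,\}$, which matches the images computed above.

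Combining these observations gives $\langle \bar{R} \rangle / \langle C \rangle = \langle R \rangle_\otimes$, and substituting into the isomorphism above yields
\[
    \tL \bar{Q} / \langle \bar{R} \rangle \;\cong\; \tM Q / \langle R \rangle_\otimes \;\cong\; \rM,
\]
which is the desired linear presentation. The main obstacle will be bookkeeping: the ideals $\langle C \rangle$, $\langle R \rangle_\otimes$, and $\langle \bar{R} \rangle$ live in three different ambient categories, and one must be careful that the quotient map $\tL \bar{Q} \to \tM Q$ sends $r_{a,b}$ to $1_a \otimes r \otimes 1_b$, which requires unpacking the somewhat involved inductive definition of $\otimes$ on $\rM^\prime$ and checking independence of representative. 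Once these identifications are in place, the theorem is essentially the concatenation of the third isomorphism theorem with \cref{tenidealandidealstruc}.
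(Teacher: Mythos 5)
Your proposal is correct and follows essentially the same route as the paper's own proof: apply \cref{ThirdIso} with $\rI=\left<C\right>$ and $\rJ=\left<\bar{R}\right>$, identify $\tL\bar{Q}/\left<C\right>$ with $\tM Q$, compute that the image of $r_{a,b}$ is $1_a\otimes[r]\otimes 1_b$, and invoke \cref{tenidealandidealstruc} to match the generating sets of the two ideals. You even anticipate the independence-of-representative issue that the paper relegates to a remark, so nothing is missing.
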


\begin{rem}
In the construction of $R^\prime$, we choose some representatives of the cosets $[r]$, which might contradict well-definedness. However, we can see from the proof below that only the coset itself matters, namely the resulting presentation is independent with the choice of the representatives.
\end{rem}
\begin{proof}
	Let $Q=(X,E)$ and $\bar{Q}=(M(X),\bar{E})$ where $M(X)$ and $\bar{E}$ are defined as above. We shall prove that $\rM = \tM Q/ \left<R\right>_\otimes$, as a linear category, is isomorphic to $\tL \bar{Q} / \left<\bar{R}\right>$.
    Let $\rL= \tL \bar{Q}$, $\rI=\left<C\right>$ and $\rJ=\left<\bar{R}\right>$. Now $\rD = \rL/\rJ$ and by \cref{ThirdIso}, $\rD =(\rL/\rI)/(\rJ/\rI)$.
    Notice $\rI$ is exactly the interchange law for triples, so we can equip $\rL/\rI$ with the natural tensor product as in \cref{sec:LinMonCatExist}. Therefore $\rL/\rI$ coincides with $\tM Q$. Thus it suffices to show $\rJ/\rI$ is the same ideal as $\left<R\right>_\otimes$.

    On the one hand, by \cref{tenidealandidealstruc}, we have $\left<R\right>_\otimes$, as an ideal, is generated by $\tilde{R}$, where
\[
	\tilde{R}=\left\lbrace 1_a \otimes [r] \otimes 1_b \mid a,b \in M(X), r \in  R \right\rbrace  .
\]
    One the other hand, since $\rJ = \left<\bar{R}\right>= \left<C\cup R^\prime\right>$, the ideal $\rJ/\rI$ of $\rL/\rI=\tM Q$ is generated by
    \[
        [s], \quad s \in C \cup R^\prime.
    \]
   If $s \in C$, we have $[s]=[0]$. For $s \in R^\prime$, there exists $r \in R$ such that $s =r_{a,b}$. Namely
 \begin{align*}
        [s] &=  [r_{a,b}] = \sum_{i=1}^n k_i [(f_i)_{a,b}],\\
          [(f_i)_{a,b}] &= (\alpha^i_1)_{a,b} \circ \dotsb \circ (\alpha^i_{n_i})_{a,b},\\
          (\alpha^i_j)_{a,b}&=(a v^i_j, e^i_j, w^i_j b)= 1_{a} \otimes (v^i_j,e^i_j,w^i_j)  \otimes 1_{b},
 \end{align*}
for some $r$ such that
  \begin{align*}
          [r]=\sum_{i=1}^n k_i [f_i],\quad
          [f_i]=\alpha^i_1 \circ \dotsb \circ \alpha^i_{n_i},\quad
          \alpha^i_j=(v^i_j, e^i_j,w^i_j).
  \end{align*}
Since $(\alpha^i_j)_{a,b} = 1_{a} \otimes (v^i_j,e^i_j,w^i_j) \otimes 1_{b}$, we have $$[r_{a,b}]=1_a \otimes [r] \otimes 1_b.$$ (In particular, this implies that the choice of representatives doesn't matter.)
Thus $\rJ/\rI$ is also generated by $\tilde{R}$.
So we have $\rJ/\rI= \left<R\right>_\otimes.$
\end{proof}

Now we focus on the endomorphisms algebras. We consider a special case where all generators are endomorphisms.
\begin{theo}\label{thm:presentationofalgebra}
Let $\rD$ be a linear monoidal category with a presentation $\left<X,E,R\right>_\otimes$, where all generating morphisms in $E$ are endomorphisms and let $\left<M(X),\bar{E},\bar{R}\right>$ be its presentation as a linear category in \cref{MainTheorem}. For an object $a$, let $\bar{E}_a$ ($\bar{R}_a$)  be the set of  all generating morphisms in $\bar{E}$ (all the relations in $\bar{R}$) that are also in the endomorphism algebra $\End(a)$. Then the endomorphism algebra $\End(a)$ has an algebra presentation
\[
    \End(a) \cong \left< \bar{E}_a \mid \bar{R}_a \right>.
\]
\end{theo}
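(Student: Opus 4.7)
The plan is to derive the algebra presentation by restricting the linear-category presentation from \cref{MainTheorem} to endomorphisms of $a$. By that theorem $\rD \cong \tL\bar{Q}/\left<\bar{R}\right>$ as linear categories, so
\[
   \End_\rD(a) \;\cong\; \tL\bar{Q}(a,a)\,\big/\,\left<\bar{R}\right>(a,a)
\]
as $\kk$-algebras under composition. It then remains to identify this quotient with $\left<\bar{E}_a\mid\bar{R}_a\right>$.

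First I would exploit the hypothesis that every $e \in E$ is an endomorphism. If $e \colon x_e \to x_e$, then each triple $(v,e,w) \in \bar{E}$ is an endomorphism of $v x_e w \in M(X)$, so every generator of $\bar{E}$ is an endomorphism. Consequently, any non-trivial path in $\bar{Q}$ is a chain of endomorphisms of a single common object, which forces $\tL\bar{Q}(a,b) = 0$ for $a \neq b$ and identifies $\tL\bar{Q}(a,a)$ with the free unital associative $\kk$-algebra $\kk\langle\bar{E}_a\rangle$, with composition realising concatenation of words and $1_a$ playing the role of the unit. The same propagates to relations: each element of $C$ is an endomorphism of some $vxwv'x'w'$ (since $y=x$ and $y'=x'$), and each $r_{a',b'} \in R'$ is an endomorphism of $a'cb'$ where $r \in R$ is an endomorphism of $c$. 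Hence $\bar{R}$ partitions into pieces $\bar{R}_c$, one per object of $M(X)$.

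Next I would invoke \cref{idealstruc} to describe $\left<\bar{R}\right>(a,a)$ as sums $\sum_i k_i\, f'_i \circ r_i \circ f_i$ whose individual terms each lie in $\End(a)$. Because $\tL\bar{Q}(a,b) = 0$ for $a \neq b$, the existence of the factors $f_i$ and $f'_i$ forces each $r_i$ to be an endomorphism of $a$, hence $r_i \in \bar{R}_a$, and the factors $f_i, f'_i$ to lie in $\tL\bar{Q}(a,a) \cong \kk\langle\bar{E}_a\rangle$; the reverse containment is immediate. Thus $\left<\bar{R}\right>(a,a)$ coincides with the two-sided algebra ideal of $\kk\langle\bar{E}_a\rangle$ generated by $\bar{R}_a$, and quotienting yields $\End_\rD(a) \cong \left<\bar{E}_a\mid\bar{R}_a\right>$.

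The main obstacle is the structural observation that $\tL\bar{Q}(a,b)$ vanishes for $a \neq b$ together with the verification that every element of $\bar{R}$ sits inside a single endomorphism algebra; once these are cleanly in place, the categorical ideal-restriction collapses to an ordinary two-sided algebra ideal computation, and the remainder is a direct comparison of two presentations of the same quotient.
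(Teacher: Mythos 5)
Your proposal is correct and follows essentially the same route as the paper: both arguments observe that every generator in $\bar{E}$ is an endomorphism, identify $\End_\rD(a)$ with a quotient of the free algebra on $\bar{E}_a$, and use \cref{idealstruc} to show that the endomorphism part of $\left<\bar{R}\right>$ is the two-sided algebra ideal generated by $\bar{R}_a$. Your write-up is in fact somewhat more careful than the paper's, since you make explicit the key structural point that $\tL\bar{Q}(a,b)$ vanishes for $a \neq b$, which forces the factors $f_i, f'_i$ and the relations $r_i$ appearing in \cref{idealstruc} to live entirely inside $\End(a)$.
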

\begin{proof}
Since all morphisms in $E$ are endomorphisms, all morphisms in $\bar{E}$, namely
\[
    (1_v,e,1_w), \quad v,w \in M(X) \text{ and } e \in E,
\]
are endomorphisms. Therefore $\End_\rD(a)$, as a algebra, is generated by generators of $\rD$ which are also in $\End_\rD(a)$, denoted by $E_a$.
On the other hand, the ideal generated by $\bar{R}$, by \cref{idealstruc}, consists of morphisms of the form
\[
    	\sum_{i=1}^n k_i (f^\prime_i\circ r_i \circ f_i),
\]
where $k_i \in \kk$, $r_i \in R_a$ and $f_i, f^\prime_i \in \Hom(\rD)$.
Since all morphisms in $M$ are endomorphisms, its endomorphism algebra $\End_{\left<R\right>}(a)$ consists of morphisms of the above form with $f, f^\prime \in \End_\rD(a)$. Thus $\End_{\left<R\right>}(a)$ is exactly the ideal of the algebra $\End(a)$ generated by $\bar{R}_a$. Hence $\End(a)$ has the presentation $\left< \bar{E}_a \mid \bar{R}_a \right>$.
\end{proof}
\color{black}

%=====================
\section{Monoidally generated algebras} \label{sec:MonGenAlg}
%=====================

In this section, we will introduce some important (strict) linear monoidal categories by following the presentation in \cite[\S 3]{Savage18}, where all generating morphisms are endomorphisms, and apply \cref{thm:presentationofalgebra,MainTheorem} to produce presentations of their endomorphism algebras.

\subsection{String diagrams}\label{sec:StringDiag}
To help visualize these linear monoidal categories, we will use the notation of string diagrams. We follow the presentation of \cite[\S 2.2]{Savage18}.

A morphism $f \colon a \to b$ is denoted by a vertical strand with a coupon labeled $f$, read from the bottom to the top. In particular, identity morphisms are denoted by empty strands.
\[
  f \colon a \to b \ \text{is denoted by}\ \
  \begin{tikzpicture}[anchorbase]
    \draw (0,0) node[anchor=north] {\regionlabel{a}} to (0,1) node[anchor=south] {\regionlabel{b}};
    \filldraw[black,fill=white] (0,0.7) arc(90:450:0.2);
    \node at (0,0.5) {\tokenlabel{f}};
  \end{tikzpicture}, \qquad
   1_a \colon a \to a \ \text{is denoted by}\ \
  \begin{tikzpicture}[anchorbase]
    \draw (0,0) node[anchor=north] {\regionlabel{a}} to (0,1) node[anchor=south] {\regionlabel{a}};
  \end{tikzpicture}.
\]
Composition of morphisms is denoted by vertical stacking and tensor product is denoted by horizontal juxtaposition:
\[
  \begin{tikzpicture}[anchorbase]
    \draw (0,0) node[anchor=north] {\regionlabel{b}} to (0,1) node[anchor=south] {\regionlabel{c}};
    \filldraw[black,fill=white] (0,0.7) arc(90:450:0.2);
    \node at (0,0.5) {\tokenlabel{f}};
  \end{tikzpicture}
  \ \circ \
   \begin{tikzpicture}[anchorbase]
    \draw (0,0) node[anchor=north] {\regionlabel{a}} to (0,1) node[anchor=south] {\regionlabel{b}};
    \filldraw[black,fill=white] (0,0.7) arc(90:450:0.2);
    \node at (0,0.5) {\tokenlabel{g}};
  \end{tikzpicture}
        \ =\
  \begin{tikzpicture}[anchorbase]
       \draw (0,0) node[anchor=north] {\regionlabel{a}} to (0,1.4) node[anchor=south] {\regionlabel{c}};
    \filldraw[black,fill=white] (0,1.2) arc(90:450:0.2);
    \node at (0,1) {\tokenlabel{f}};
    \filldraw[black,fill=white] (0,0.6) arc(90:450:0.2);
    \node at (0,0.4) {\tokenlabel{g}};
  \end{tikzpicture}
  \ =\
   \begin{tikzpicture}[anchorbase]
    \draw (0,0) node[anchor=north] {\regionlabel{a}} to (0,1.4) node[anchor=south] {\regionlabel{c}};
    \filldraw[black,fill=white] (0,1) arc(90:450:0.3);
    \node at (0,0.7) {\tokenlabel{f \circ g}};
  \end{tikzpicture},
  \qquad
  \begin{tikzpicture}[anchorbase]
    \draw (0,0) node[anchor=north] {\regionlabel{a}} to (0,1) node[anchor=south] {\regionlabel{b}};
    \filldraw[black,fill=white] (0,0.7) arc(90:450:0.2);
    \node at (0,0.5) {\tokenlabel{f}};
  \end{tikzpicture}
  \ \otimes \
   \begin{tikzpicture}[anchorbase]
    \draw (0,0) node[anchor=north] {\regionlabel{c}} to (0,1) node[anchor=south] {\regionlabel{d}};
    \filldraw[black,fill=white] (0,0.7) arc(90:450:0.2);
    \node at (0,0.5) {\tokenlabel{g}};
  \end{tikzpicture}
  \ =\
  \begin{tikzpicture}[anchorbase]
    \draw (0,0) node[anchor=north] {\regionlabel{a}} to (0,1) node[anchor=south] {\regionlabel{b}};
    \filldraw[black,fill=white] (0,0.7) arc(90:450:0.2);
    \node at (0,0.5) {\tokenlabel{f}};
    \draw (0.5,0) node[anchor=north] {\regionlabel{c}} to (0.5,1) node[anchor=south] {\regionlabel{d}};
    \filldraw[black,fill=white] (0.5,0.7) arc(90:450:0.2);
    \node at (0.5,0.5) {\tokenlabel{g}};
  \end{tikzpicture}.
\]
The \emph{interchange law} for $f \colon a \to b$ and $g \colon c \to d$
\[
	(f \otimes 1_{d}) \circ (1_{a} \otimes g)
    \ =\
    f \otimes g
    \ =\
    (1_{b} \otimes g) \circ (f \otimes 1_{c})
\]
then becomes the following, where we omit the object labels:
\[
  \begin{tikzpicture}[anchorbase]
    \draw (0,0) to (0,1.6);
    \filldraw[black,fill=white] (0,1.3) arc(90:450:0.2);
    \node at (0,1.1) {\tokenlabel{f}};
    \draw (0.5,0) to (0.5,1.6);
    \filldraw[black,fill=white] (0.5,0.7) arc(90:450:0.2);
    \node at (0.5,0.5) {\tokenlabel{g}};
  \end{tikzpicture}
  \ =\
  \begin{tikzpicture}[anchorbase]
    \draw (0,0) to (0,1.6);
    \filldraw[black,fill=white] (0,1) arc(90:450:0.2);
    \node at (0,0.8) {\tokenlabel{f}};
    \draw (0.5,0) to (0.5,1.6);
    \filldraw[black,fill=white] (0.5,1) arc(90:450:0.2);
    \node at (0.5,0.8) {\tokenlabel{g}};
  \end{tikzpicture}
  \ =\
  \begin{tikzpicture}[anchorbase]
    \draw (0,0) to (0,1.6);
    \filldraw[black,fill=white] (0,0.7) arc(90:450:0.2);
    \node at (0,0.5) {\tokenlabel{f}};
    \draw (0.5,0) to (0.5,1.6);
    \filldraw[black,fill=white] (0.5,1.3) arc(90:450:0.2);
    \node at (0.5,1.1) {\tokenlabel{g}};
  \end{tikzpicture}.
\]

In the following sections we will use string diagrams to visualize the generators and relations of several linear monoidal categories. We may generalize the horizontal juxaposition by using it to denote tensor products of objects and triples $(v,e,w)$, namely
\[
    1_{v \otimes w} =
    \begin{tikzpicture}[anchorbase]
    \draw  (0,0) node[anchor=north] {\regionlabel{v}}
    to (0,1) node[anchor=south] {\regionlabel{v}} ;
    \draw   (0.4,0) node[anchor=north] {\regionlabel{w}}
    to (0.4,1) node[anchor=south] {\regionlabel{w}} ;
    \end{tikzpicture}
    \quad \text{and} \quad
    (v,e,w)
    \ =\
	\begin{tikzpicture}[anchorbase]
    \draw (-0.5,0) node[anchor=north] {\regionlabel{v}} to (-0.5,1)
    node[anchor=south] {\regionlabel{v}};
    \draw (-0,-0) node[anchor=north] {\regionlabel{x}} to (0,1) node[anchor=south] {\regionlabel{y}};
    \filldraw[black,fill=white] (0,0.7) arc(90:450:0.2);
    \node at (0,0.5) {\tokenlabel{e}};
    \draw (0.5,0) node[anchor=north] {\regionlabel{w}} to (0.5,1) node[anchor=south] {\regionlabel{w}};
  \end{tikzpicture}
  \quad \text{for } e\colon x \to y.
\]

\color{black}
%---------------------
\subsection{The Symmetric group} \label{sec:Sym}
Let $\cS$ be the linear monoidal category with the presentation $\left<X_\cS,E_\cS,R_\cS\right>_\otimes$, where
\[
    X_\cS=\left\lbrace a\right\rbrace  ,\  E_\cS=\left\lbrace e \colon a\otimes a \to a \otimes a\right\rbrace  \text{ and }R_\cS=\left\lbrace r_1,r_2\right\rbrace  ,
\]
where
\[
	r_1 = e^2-1_a,\quad
    r_2 = (e\otimes 1_a) \circ (1_a \otimes e) \circ (e \otimes 1_a) - (1_a \otimes e) \circ (e \otimes 1_a) \circ (1_a \otimes e).
\]
Using string diagrams, we denote
\begin{itemize}
    \item the generating object $a$ as $\uparrow$ and,
    \item the generating morphism $e$ as
$
      \begin{tikzpicture}[anchorbase]
        \draw[->] (-0.25,-0.25) to (0.25,0.25);
        \draw[->] (0.25,-0.25) to (-0.25,0.25);
      \end{tikzpicture},
$
\end{itemize}
then $R_\cS$ becomes
\begin{equation} \label{Sn-strings}
      \begin{tikzpicture}[anchorbase]
        \draw[->] (0.3,0) to[out=up,in=down] (-0.1,0.5) to[out=up,in=down] (0.3,1);
        \draw[->] (-0.1,0) to[out=up,in=down] (0.3,0.5) to[out=up,in=down] (-0.1,1);
      \end{tikzpicture}
      \ =\
      \begin{tikzpicture}[anchorbase]
        \draw[->] (-0.2,0) -- (-0.2,1);
        \draw[->] (0.2,0) -- (0.2,1);
      \end{tikzpicture}
            \qquad \text{and} \qquad
      \begin{tikzpicture}[anchorbase]
        \draw[->] (0.4,0) -- (-0.4,1);
        \draw[->] (0,0) to[out=up, in=down] (-0.4,0.5) to[out=up,in=down] (0,1);
        \draw[->] (-0.4,0) -- (0.4,1);
      \end{tikzpicture}
      \ =\
      \begin{tikzpicture}[anchorbase]
        \draw[->] (0.4,0) to  (-0.4,1);
        \draw[->] (0,0) to[out=up, in=down] (0.4,0.5) to[out=up,in=down] (0,1);
        \draw[->] (-0.4,0) to  (0.4,1);
      \end{tikzpicture}
      \ .
\end{equation}

By \cref{MainTheorem} %and \cref{MainTheorem2}
, we have that $\cS$ has the presentation $\left<M(X_\cS),\overline{E_\cS},\overline{R_\cS}\right>$ as a linear category.
\begin{itemize}
\item The objects are $M(X_\cS)=\left\lbrace \uparrow^{\otimes n} \mid n \in \N\right\rbrace  $. To simplify our notation, we also denote the tensor product on objects $\otimes$ as concatenation. Thus  $M(X_\cS)=\left\lbrace \uparrow^{ n} \mid n \in \N\right\rbrace  $
\item The generating morphisms in $\overline{E_\cS}$ are $(v,e,w)$, where $e \colon \uparrow^2 \to \uparrow^2$ and $v,w \in M(X_\cS)$, namely
\[
    \bar{E}_\cS = \left\lbrace \
    \left(\uparrow^m,
      \begin{tikzpicture}[anchorbase]
        \draw[->] (-0.25,-0.25) to (0.25,0.25);
        \draw[->] (0.25,-0.25) to (-0.25,0.25);
      \end{tikzpicture},
     \uparrow^n\right)
    \ =\
    \begin{tikzpicture}[anchorbase]
	    \draw[->] (-1.2,0) to (-1.2,0.5);
	    \node at (-0.8,0.25) {$\cdots$};
	    \node at (-0.8,0) {\regionlabel{m}};
	    \draw[->] (-0.4,0) to (-0.4,0.5);
	    \draw[->] (0,0) to [out=90, in=270]  (0.4,0.5);
	    \draw[->] (0.4,0) to [out=90, in=270]  (0,0.5);
	    \draw[->] (0.8,0) to (0.8,0.5);
	    \node at (1.2,0.25) {$\cdots$};
	    \node at (1.2,0) {\regionlabel{n}};
	    \draw[->] (1.6,0) to (1.6,0.5);
	\end{tikzpicture}\
	\mid
	m,n \in \N \right\rbrace  .
\]
\item The relations are $\overline{R_\cS} = C_\cS \cup R_\cS^\prime$. The relations in $C_\cS$ are the interchange laws of triples, namely
\begin{align} \label{SymInt}
    \begin{tikzpicture}[anchorbase]
	    \draw[->] (-1.2,0) to (-1.2,1);
	    \node at (-0.8,0.5) {$\cdots$};
	    \node at (-0.8,0) {\regionlabel{m}};
	    \draw[->] (-0.4,0) to (-0.4,1);
	    \draw[->] (0,0) to (0,0.5) to [out=90, in=270]  (0.4,1);
	    \draw[->] (0.4,0) to (0.4,0.5) to [out=90, in=270]  (0,1);
	    \draw[->] (0.8,0) to (0.8,1);
	    \node at (1.2,0.5) {$\cdots$};
	    \node at (1.2,0) {\regionlabel{l}};
	    \draw[->] (1.6,0) to (1.6,1);
	    \draw[->] (2,0) to [out=90, in=270]  (2.4,0.5) to (2.4,1);
	    \draw[->] (2.4,0) to [out=90, in=270]  (2,0.5) to (2,1);
	    \draw[->] (2.8,0) to (2.8,1);
	    \node at (3.2,0.5) {$\cdots$};
	    \node at (3.2,0) {\regionlabel{n}};
	    \draw[->] (3.6,0) to (3.6,1);
	\end{tikzpicture}
	\ =\
	    \begin{tikzpicture}[anchorbase]
	    \draw[->] (-1.2,0) to (-1.2,1);
	    \node at (-0.8,0.5) {$\cdots$};
	    \node at (-0.8,0) {\regionlabel{m}};
	    \draw[->] (-0.4,0) to (-0.4,1);
	    \draw[->] (0,0) to [out=90, in=270] (0.4,0.5) to   (0.4,1);
	    \draw[->] (0.4,0) to  [out=90, in=270]  (0,0.5) to (0,1);
	    \draw[->] (0.8,0) to (0.8,1);
	    \node at (1.2,0.5) {$\cdots$};
	    \node at (1.2,0) {\regionlabel{l}};
	    \draw[->] (1.6,0) to (1.6,1);
	    \draw[->] (2,0) to  (2.0,0.5) to  [out=90, in=270] (2.4,1);
	    \draw[->] (2.4,0) to  (2.4 ,0.5) to [out=90, in=270] (2,1);
	    \draw[->] (2.8,0) to (2.8,1);
	    \node at (3.2,0.5) {$\cdots$};
	    \node at (3.2,0) {\regionlabel{n}};
	    \draw[->] (3.6,0) to (3.6,1);
	\end{tikzpicture},
\end{align}
for all $m,n,l \in \N$.
The relations in $R_\cS^\prime$ are $(r_i)_{a,b}$ as defined in \cref{MainTheorem}, where $r_i \in R$, $a,b \in M(X)$, namely
\begin{align} \label{SymRel}
\begin{split}
    \begin{tikzpicture}[anchorbase]
	    \draw[->] (-1.2,0) to (-1.2,1);
	    \node at (-0.8,0.5) {$\cdots$};
	    \node at (-0.8,0) {\regionlabel{m}};
	    \draw[->] (-0.4,0) to (-0.4,1);
	    \draw[->] (0,0) to [out=90, in=270]  (0.4,0.5) to [out=up, in=down] (0,1);
	    \draw[->] (0.4,0) to [out=90, in=270] (0,0.5) to [out=up,in=down] (0.4,1);
	    \draw[->] (0.8,0) to (0.8,1);
	    \node at (1.2,0.5) {$\cdots$};
	    \node at (1.2,0) {\regionlabel{n}};
	    \draw[->] (1.6,0) to (1.6,1);
	\end{tikzpicture}
	\ &=\
	\begin{tikzpicture}[anchorbase]
	    \draw[->] (-1.2,0) to (-1.2,1);
	    \node at (-0.8,0.5) {$\cdots$};
	    \node at (-0.8,0) {\regionlabel{m}};
	    \draw[->] (-0.4,0) to (-0.4,1);
	    \draw[->] (0,0) to (0,1);
	    \draw[->] (0.4,0) to (0.4,1);
	    \draw[->] (0.8,0) to (0.8,1);
	    \node at (1.2,0.5) {$\cdots$};
	    \node at (1.2,0) {\regionlabel{n}};
	    \draw[->] (1.6,0) to (1.6,1);
	\end{tikzpicture}
	,\\
    \begin{tikzpicture}[anchorbase]
	    \draw[->] (-1.2,0) to (-1.2,1);
	    \node at (-0.8,0.5) {$\cdots$};
	    \node at (-0.8,0) {\regionlabel{m}};
	    \draw[->] (-0.4,0) to (-0.4,1);
	    \draw[->] (0.8,0) -- (0,1);
        \draw[->] (0.4,0) to[out=up, in=down] (0,0.5) to[out=up,in=down] (0.4,1);
        \draw[->] (0,0) -- (0.8,1);
	    \draw[->] (1.2,0) to (1.2,1);
	    \node at (1.6,0.5) {$\cdots$};
	    \node at (1.6,0) {\regionlabel{n}};
	    \draw[->] (2,0) to (2,1);
	\end{tikzpicture}
	\ &=\
    \begin{tikzpicture}[anchorbase]
	    \draw[->] (-1.2,0) to (-1.2,1);
	    \node at (-0.8,0.5) {$\cdots$};
	    \node at (-0.8,0) {\regionlabel{m}};
	    \draw[->] (-0.4,0) to (-0.4,1);
	   \draw[->] (0.8,0) to  (0,1);
        \draw[->] (0.4,0) to[out=up, in=down] (0.8,0.5) to[out=up,in=down] (0.4,1);
        \draw[->] (0,0) to  (0.8,1);	
        \draw[->] (1.2,0) to (1.2,1);
	    \node at (1.6,0.5) {$\cdots$};
	    \node at (1.6,0) {\regionlabel{n}};
	    \draw[->] (2,0) to (2,1);
	\end{tikzpicture},
\end{split}
\end{align}
for all $m,n \in \N$.
\end{itemize}

Fix $d \in \N$, and consider the endomorphism algebra $\End(\uparrow^d)$.
%Since $\cS = \rN \bar{Q} / \left< \bar{R} \right>$, where $\bar{Q}=(M(X),\bar{E})$, we have $\End_\cS(a^d) = \End_{\rN \bar{Q}}(a^d)/\End_{\left<\bar{R}\right>}(a^d)$.
Since the only generating morphism is an endomorphism, by \cref{thm:presentationofalgebra}, we have that
generators of $\End(\uparrow^d)$ are
\begin{align}\label{sym-end-gen}
s_i =
\begin{tikzpicture}[anchorbase]
	    \draw[->] (-1.2,0) to (-1.2,0.5);
	    \node at (-0.8,0.25) {$\cdots$};
	    \node[below] at (-0.8,0) {\regionlabel{d-1-i}};
	    \draw[->] (-0.4,0) to (-0.4,0.5);
	    \draw[->] (0,0) to [out=90, in=270]  (0.4,0.5);
	    \draw[->] (0.4,0) to [out=90, in=270]  (0,0.5);
	    \draw[->] (0.8,0) to (0.8,0.5);
	    \node at (1.2,0.25) {$\cdots$};
	    \node[below] at (1.2,0) {\regionlabel{i-1}};
	    \draw[->] (1.6,0) to (1.6,0.5);
	\end{tikzpicture},
\quad i=1,...,d-1.
\end{align}
Then relations that the generators $s_i,i=1,...,d-1$ satisfy, denoted by $\overline{R_\cS}^d$, are:
\begin{align}\label{sym-end-rel}
\begin{split}
	s_i s_j = s_j s_i,  \quad&  \left|i-j\right|>1,\\
    s_i^2 = 1,  \quad& i=1,\dots,d-1,\\
  	s_{i+1} s_i s_{i+1} = s_i s_{i+1} s_i, \quad & i=1,\dots,d-2,
\end{split}
\end{align}
where the first one is from $C_\cS$
%\cref{SymInt}
and the last two are from $R^\prime_\cS$
%\cref{SymRel}.
By \cref{thm:presentationofalgebra}, we have that $\End_\cS(\uparrow^d)$, as an algebra, has the presentation $\left< s_1,\dots,s_{d-1} \mid \overline{R_\cS}^d \right>$. Notice this is exactly the presentation of the group algebra of symmetric group $S_d$. Thus we have
\begin{align*}
	\End_\cS(\uparrow^d) \cong \kk S_d.
\end{align*}

%-------------------------------------------------------------
\subsection{Degenerate affine Hecke algebras\label{sec:dAHA}}
%-------------------------------------------------------------

Let $\AH^\dg$ be the strict $\kk$-linear monoidal category $\cS$ defined in \cref{sec:Sym}, but with an additional generating morphism
$
  \begin{tikzpicture}[anchorbase]
    \draw[->] (0,0) to (0,0.6);
    \redcircle{(0,0.3)};
  \end{tikzpicture}
  \ \colon \uparrow\ \to\ \uparrow
$
and one additional relation:
\[
  \begin{tikzpicture}[anchorbase]
    \draw[->] (0,0) -- (0.6,0.6);
    \draw[->] (0.6,0) -- (0,0.6);
    \redcircle{(0.15,.45)};
  \end{tikzpicture}
  \ -\
  \begin{tikzpicture}[anchorbase]
    \draw[->] (0,0) -- (0.6,0.6);
    \draw[->] (0.6,0) -- (0,0.6);
    \redcircle{(.45,.15)};
  \end{tikzpicture}
  \ =\
  \begin{tikzpicture}[anchorbase]
    \draw[->] (0,0) -- (0,0.6);
    \draw[->] (0.3,0) -- (0.3,0.6);
  \end{tikzpicture}\ .
\]
It follows that all morphisms in $\AH^\dg$ are endomoprhisms, and we have that \cref{thm:presentationofalgebra} applies.
Thus if we fix $d$, we have that the endormorphism algebra $\End(\uparrow^d)$ is generated by
\[
    s_i =
  \begin{tikzpicture}[anchorbase]
	    \draw[->] (-1.2,0) to (-1.2,0.5);
	    \node at (-0.8,0.25) {$\cdots$};
	    \node[below] at (-0.8,0) {\regionlabel{d-1-i}};
	    \draw[->] (-0.4,0) to (-0.4,0.5);
	    \draw[->] (0,0) to [out=90, in=270]  (0.4,0.5);
	    \draw[->] (0.4,0) to [out=90, in=270]  (0,0.5);
	    \draw[->] (0.8,0) to (0.8,0.5);
	    \node at (1.2,0.25) {$\cdots$};
	    \node[below] at (1.2,0) {\regionlabel{i-1}};
	    \draw[->] (1.6,0) to (1.6,0.5);
	\end{tikzpicture},
	\quad \text{and} \quad
	t_j =
	  \begin{tikzpicture}[anchorbase]
	    \draw[->] (-1.2,0) to (-1.2,0.5);
	    \node at (-0.8,0.25) {$\cdots$};
	    \node[below] at (-0.8,0) {\regionlabel{d-j}};
	    \draw[->] (-0.4,0) to (-0.4,0.5);
	    \draw[->] (0,0) to (0,0.5);
	    \redcircle{(0,0.25)};
	    \draw[->] (0.4,0) to (0.4,0.5);
	    \node at (0.8,0.25) {$\cdots$};
	    \node[below] at (0.8,0) {\regionlabel{j-1}};
	    \draw[->] (1.2,0) to (1.2,0.5);
	\end{tikzpicture},
\]
where $i=1,\dots,d-1$ and $j=1,\dots,d$. The generators $s_i$ satisfy the relations in \cref{SymRel} and \cref{SymInt}. Besides, the generators $s_i$ and $t_j$ satisfy that:
\begin{align}\label{ahdeg-end-rel}
\begin{split}
     \begin{tikzpicture}[anchorbase]
	    \draw[->] (-1.2,0) to (-1.2,1);
	    \node at (-0.8,0.5) {$\cdots$};
	    \node[below] at (-0.8,0) {\regionlabel{d-1-i}};
	    \draw[->] (-0.4,0) to (-0.4,1);
	    \draw[->] (0,0) to [out=90, in=270] (0.4,0.5) to   (0.4,1);
	    \draw[->] (0.4,0) to  [out=90, in=270]  (0,0.5) to (0,1);
	    \redcircle{(0,0.75)};
	    \draw[->] (0.8,0) to (0.8,1);
	    \node at (1.2,0.5) {$\cdots$};
	    \node[below] at (1.2,0) {\regionlabel{i-1}};
	    \draw[->] (1.6,0) to (1.6,1);
    \end{tikzpicture}
    \ -\
    \begin{tikzpicture}[anchorbase]
	    \draw[->] (-1.2,0) to (-1.2,1);
	    \node at (-0.8,0.5) {$\cdots$};
	    \node[below] at (-0.8,0) {\regionlabel{d-1-i}};
	    \draw[->] (-0.4,0) to (-0.4,1);
	    \draw[->] (0,0) to (0,0.5) to [out=90, in=270]  (0.4,1);
	    \draw[->] (0.4,0) to (0.4,0.5) to [out=90, in=270]  (0,1);
	    \redcircle{(0.4,0.25)};
	    \draw[->] (0.8,0) to (0.8,1);
	    \node at (1.2,0.5) {$\cdots$};
	    \node[below] at (1.2,0) {\regionlabel{i-1}};
	    \draw[->] (1.6,0) to (1.6,1);
	\end{tikzpicture}
	\ =\
	 \begin{tikzpicture}[anchorbase]
	    \draw[->] (-1.2,0) to (-1.2,1);
	    \node at (-0.8,0.5) {$\cdots$};
	    \node[below] at (-0.8,0) {\regionlabel{d-1-i}};
	    \draw[->] (-0.4,0) to (-0.4,1);
	    \draw[->] (0,0) to (0,1);
	    \draw[->] (0.4,0) to (0.4,1);
	    \draw[->] (0.8,0) to (0.8,1);
	    \node at (1.2,0.5) {$\cdots$};
	    \node[below] at (1.2,0) {\regionlabel{i-1}};
	    \draw[->] (1.6,0) to (1.6,1);
	\end{tikzpicture},
%	\quad \text{for all } i=1,\dots,d-1
\end{split}
\end{align}
\begin{align}\label{ahdeg-end-interchange}
\begin{split}
    \begin{tikzpicture}[anchorbase]
	    \draw[->] (-1.2,0) to (-1.2,1);
	    \node at (-0.8,0.5) {$\cdots$};
	    \node[below] at (-0.8,0) {\regionlabel{d-j}};
	    \draw[->] (-0.4,0) to (-0.4,1);
	    \draw[->] (0,0) to (0,1);
	    \redcircle{(0,0.75)};
	    \draw[->] (0.4,0) to (0.4,1);
	    \node at (0.8,0.5) {$\cdots$};
	    \node[below] at (0.8,0) {\regionlabel{(j-i)-1}};
	    \draw[->] (1.2,0) to (1.2,1);
	  	\draw[->] (1.6,0) to (1.6,1);
	    \redcircle{(1.6,0.25)};
	    \draw[->] (2,0) to (2, 1);
	    \node at (2.4,0.5) {$\cdots$};
	    \node[below] at (2.4,0) {\regionlabel{i-1}};
	    \draw[->] (2.8,0) to (2.8,1);
	\end{tikzpicture}
	\ =&\
	 \begin{tikzpicture}[anchorbase]
	    \draw[->] (-1.2,0) to (-1.2,1);
	    \node at (-0.8,0.5) {$\cdots$};
	    \node[below] at (-0.8,0) {\regionlabel{d-j}};
	    \draw[->] (-0.4,0) to (-0.4,1);
	    \draw[->] (0,0) to (0,1);
	    \redcircle{(0,0.25)};
	    \draw[->] (0.4,0) to (0.4,1);
	    \node at (0.8,0.5) {$\cdots$};
	    \node[below] at (0.8,0) {\regionlabel{(j-i)-1}};
	    \draw[->] (1.2,0) to (1.2,1);
	  	\draw[->] (1.6,0) to (1.6,1);
	    \redcircle{(1.6,0.75)};
	    \draw[->] (2,0) to (2, 1);
	    \node at (2.4,0.5) {$\cdots$};
	    \node[below] at (2.4,0) {\regionlabel{i-1}};
	    \draw[->] (2.8,0) to (2.8,1);
	\end{tikzpicture},
	%\quad \text{for all } i,j=1\dots d-1
	\\
	    \begin{tikzpicture}[anchorbase]
	    \draw[->] (-1.6,0) to (-1.6,1);
	    \node at (-1.2,0.5) {$\cdots$};
	    \node[below] at (-1.2,0) {\regionlabel{d-1-j}};
	    \draw[->] (-0.8,0) to (-0.8,1);
        \draw[->] (-0.4,0) to [out=90, in=270] (0,0.5) to  (0,1);
	    \draw[->] (0,0) to  [out=90, in=270]  (-0.4,0.5) to (-0.4,1);
        \draw[->] (0.4,0) to (0.4,1);
	    \node at (0.8,0.5) {$\cdots$};
	    \node[below] at (0.8,0) {\regionlabel{(j-i)-1}};
	    \draw[->] (1.2,0) to (1.2,1);
	  	\draw[->] (1.6,0) to (1.6,1);
	    \redcircle{(1.6,0.75)};
	    \draw[->] (2,0) to (2, 1);
	    \node at (2.4,0.5) {$\cdots$};
	    \node[below] at (2.4,0) {\regionlabel{i-1}};
	    \draw[->] (2.8,0) to (2.8,1);
	\end{tikzpicture}
	\ =&\
		\begin{tikzpicture}[anchorbase]
	    \draw[->] (-1.6,0) to (-1.6,1);
	    \node at (-1.2,0.5) {$\cdots$};
	    \node[below] at (-1.2,0) {\regionlabel{d-1-j}};
	    \draw[->] (-0.8,0) to (-0.8,1);
        \draw[->] (-0.4,0) to  (-0.4,0.5) to [out=90, in=270] (0,1);
	    \draw[->] (0,0) to (0,0.5) to  [out=90, in=270]  (-0.4,1);
        \draw[->] (0.4,0) to (0.4,1);
	    \node at (0.8,0.5) {$\cdots$};
	    \node[below] at (0.8,0) {\regionlabel{(j-i)-1}};
	    \draw[->] (1.2,0) to (1.2,1);
	  	\draw[->] (1.6,0) to (1.6,1);
	    \redcircle{(1.6,0.25)};
	    \draw[->] (2,0) to (2, 1);
	    \node at (2.4,0.5) {$\cdots$};
	    \node[below] at (2.4,0) {\regionlabel{i-1}};
	    \draw[->] (2.8,0) to (2.8,1);
	\end{tikzpicture},
\end{split}
\end{align}
where $i,j=1,\dots,d-1,$ and in the last relation $i\neq j, i\neq j+1$.
The generators and relations form the presentation of the \emph{degenerate affine Hecke algebra} of type $A_{d-1}$. One can refer to \cite[5.55]{Mol00} for more details about these algebras.

\color{black}

\subsection{The braid group} \label{braid-cat}
Let $\cB$ be a linear monoidal category with the presentation
$ \left<
    \uparrow,E_\cB,R_\cB
 \right>_\otimes,$
 where the generating morphisms in $E_\cB$ are
\begin{equation} \label{poscross}
 \begin{tikzpicture}[anchorbase]
 	\draw[->]  (0.25, -0.25) to (-0.25,0.25);
   	\draw[wipe] (-0.25,-0.25) to (0.25,0.25);
    \draw[->] (-0.25,-0.25) to (0.25,0.25);
  \end{tikzpicture}
  \quad \text{and} \quad
  \begin{tikzpicture}[anchorbase]
 	\draw[->]  (-0.25, -0.25) to (0.25,0.25);
   	\draw[wipe] (0.25,-0.25) to (-0.25,0.25);
    \draw[->] (0.25,-0.25) to (-0.25,0.25);
  \end{tikzpicture},
\end{equation}
and relations in $R_\cB$ are
\begin{equation} \label{braid-rel}
  \begin{tikzpicture}[anchorbase]
    \draw[->] (0.3,0) to[out=up,in=down] (-0.1,0.5) to[out=up,in=down] (0.3,1);
    \draw[wipe] (-0.1,0) to[out=up,in=down] (0.3,0.5) to[out=up,in=down] (-0.3,1);
    \draw[->] (-0.1,0) to[out=up,in=down] (0.3,0.5) to[out=up,in=down] (-0.1,1);
  \end{tikzpicture}
  \ =\
  \begin{tikzpicture}[anchorbase]
    \draw[->] (-0.2,0) -- (-0.2,1);
    \draw[->] (0.2,0) -- (0.2,1);
  \end{tikzpicture},
  \quad
  \begin{tikzpicture}[anchorbase]
    \draw[->] (-0.1,0) to[out=up,in=down] (0.3,0.5) to[out=up,in=down] (-0.1,1);
    \draw[wipe] (0.3,0) to[out=up,in=down] (-0.1,0.5) to[out=up,in=down] (0.3,1);
    \draw[->] (0.3,0) to[out=up,in=down] (-0.1,0.5) to[out=up,in=down] (0.3,1);
  \end{tikzpicture}
  \ =\
  \begin{tikzpicture}[anchorbase]
    \draw[->] (-0.2,0) -- (-0.2,1);
    \draw[->] (0.2,0) -- (0.2,1);
  \end{tikzpicture}
  \quad \text{and} \quad
  \begin{tikzpicture}[anchorbase]
    \draw[->] (0.4,0) -- (-0.4,1);
    \draw[wipe] (0,0) to[out=up, in=down] (-0.4,0.5) to[out=up,in=down] (0,1);
    \draw[->] (0,0) to[out=up, in=down] (-0.4,0.5) to[out=up,in=down] (0,1);
    \draw[wipe] (-0.4,0) -- (0.4,1);
    \draw[->] (-0.4,0) -- (0.4,1);
  \end{tikzpicture}
  \ =\
  \begin{tikzpicture}[anchorbase]
    \draw[->] (0.4,0) -- (-0.4,1);
    \draw[wipe] (0,0) to[out=up, in=down] (0.4,0.5) to[out=up,in=down] (0,1);
    \draw[->] (0,0) to[out=up, in=down] (0.4,0.5) to[out=up,in=down] (0,1);
    \draw[wipe] (-0.4,0) -- (0.4,1);
    \draw[->] (-0.4,0) -- (0.4,1);
  \end{tikzpicture}
  \ .
\end{equation}
The first two relations in \cref{braid-rel} actually mean that the two generating morphisms are inverses of each other, so it suffices to give only the first one generating morphism and assume it is invertible.
By \cref{MainTheorem}, we obtain that $\cB$ has a presentation $\left< M(\uparrow), \overline{E_\cB},  \overline{R_\cB}\right>$ as a linear category, where:
\begin{itemize}
	\item $M(\uparrow)=\left\lbrace \ \uparrow^n \ \mid n \in \N\ \right\rbrace  $,
    \item Morphisms in $\overline{E_\cB}$ are
    \begin{equation}\label{braid-generator}
    b_{n,m}=
    \begin{tikzpicture}[anchorbase]
	    \draw[->] (-1.2,0) to (-1.2,0.5);
	    \node at (-0.8,0.25) {$\cdots$};
	    \node at (-0.8,0) {\regionlabel{m}};
	    \draw[->] (-0.4,0) to (-0.4,0.5);
	    \draw[->] (0.4,0) to [out=90, in=270]  (0,0.5);
	    \draw[wipe] (0,0) to [out=90, in=270]  (0.4,0.5);
	    \draw[->] (0,0) to [out=90, in=270]  (0.4,0.5);
	    \draw[->] (0.8,0) to (0.8,0.5);
	    \node at (1.2,0.25) {$\cdots$};
	    \node at (1.2,0) {\regionlabel{n}};
	    \draw[->] (1.6,0) to (1.6,0.5);
	\end{tikzpicture}
	\quad \text{and its inverse} \quad
	    \begin{tikzpicture}[anchorbase]
	    \draw[->] (-1.2,0) to (-1.2,0.5);
	    \node at (-0.8,0.25) {$\cdots$};
	    \node at (-0.8,0) {\regionlabel{m}};
	    \draw[->] (-0.4,0) to (-0.4,0.5);
	    \draw[->] (0,0) to [out=90, in=270]  (0.4,0.5);
	    \draw[wipe] (0.4,0) to [out=90, in=270]  (0,0.5);
	    \draw[->] (0.4,0) to [out=90, in=270]  (0,0.5);
	    \draw[->] (0.8,0) to (0.8,0.5);
	    \node at (1.2,0.25) {$\cdots$};
	    \node at (1.2,0) {\regionlabel{n}};
	    \draw[->] (1.6,0) to (1.6,0.5);
	\end{tikzpicture},
\end{equation}
where $m,n \in \N$
    \item Relations in $\overline{R_\cB}$ are $C_\cB$ and ${R_\cB}^\prime$, where ${R_\cB}^\prime$ consists of morphisms
\begin{align}
    \begin{split}
    \begin{tikzpicture}[anchorbase]
	    \draw[->] (-1.2,0) to (-1.2,1);
	    \node at (-0.8,0.5) {$\cdots$};
	    \node at (-0.8,0) {\regionlabel{m}};
	    \draw[->] (-0.4,0) to (-0.4,1);
	     \draw[->] (0.4,0) to [out=90, in=270] (0,0.5) to [out=up,in=down] (0.4,1);
	    \draw[wipe] (0,0) to [out=90, in=270]  (0.4,0.5) to [out=up, in=down] (0,1);
	    \draw[->] (0,0) to [out=90, in=270]  (0.4,0.5) to [out=up, in=down] (0,1);
	    \draw[->] (0.8,0) to (0.8,1);
	    \node at (1.2,0.5) {$\cdots$};
	    \node at (1.2,0) {\regionlabel{n}};
	    \draw[->] (1.6,0) to (1.6,1);
	\end{tikzpicture}
	\ &=\
	\begin{tikzpicture}[anchorbase]
	    \draw[->] (-1.2,0) to (-1.2,1);
	    \node at (-0.8,0.5) {$\cdots$};
	    \node at (-0.8,0) {\regionlabel{m}};
	    \draw[->] (-0.4,0) to (-0.4,1);
	    \draw[->] (0,0) to (0,1);
	    \draw[->] (0.4,0) to (0.4,1);
	    \draw[->] (0.8,0) to (0.8,1);
	    \node at (1.2,0.5) {$\cdots$};
	    \node at (1.2,0) {\regionlabel{n}};
	    \draw[->] (1.6,0) to (1.6,1);
	\end{tikzpicture}
	,\\
	\begin{tikzpicture}[anchorbase]
	    \draw[->] (-1.2,0) to (-1.2,1);
	    \node at (-0.8,0.5) {$\cdots$};
	    \node at (-0.8,0) {\regionlabel{m}};
	    \draw[->] (-0.4,0) to (-0.4,1);
	    \draw[->] (0,0) to [out=90, in=270]  (0.4,0.5) to [out=up, in=down] (0,1);
	    \draw[wipe] (0.4,0) to [out=90, in=270] (0,0.5) to [out=up,in=down] (0.4,1);
	    \draw[->] (0.4,0) to [out=90, in=270] (0,0.5) to [out=up,in=down] (0.4,1);
	    \draw[->] (0.8,0) to (0.8,1);
	    \node at (1.2,0.5) {$\cdots$};
	    \node at (1.2,0) {\regionlabel{n}};
	    \draw[->] (1.6,0) to (1.6,1);
	\end{tikzpicture}
	\ &=\
	\begin{tikzpicture}[anchorbase]
	    \draw[->] (-1.2,0) to (-1.2,1);
	    \node at (-0.8,0.5) {$\cdots$};
	    \node at (-0.8,0) {\regionlabel{m}};
	    \draw[->] (-0.4,0) to (-0.4,1);
	    \draw[->] (0,0) to (0,1);
	    \draw[->] (0.4,0) to (0.4,1);
	    \draw[->] (0.8,0) to (0.8,1);
	    \node at (1.2,0.5) {$\cdots$};
	    \node at (1.2,0) {\regionlabel{n}};
	    \draw[->] (1.6,0) to (1.6,1);
	\end{tikzpicture}	
	,\\
    \begin{tikzpicture}[anchorbase]
	    \draw[->] (-1.2,0) to (-1.2,1);
	    \node at (-0.8,0.5) {$\cdots$};
	    \node at (-0.8,0) {\regionlabel{m}};
	    \draw[->] (-0.4,0) to (-0.4,1);
	    \draw[->] (0.8,0) -- (0,1);
	    \draw[wipe] (0.4,0) to[out=up, in=down] (0,0.5) to[out=up,in=down] (0.4,1);
        \draw[->] (0.4,0) to[out=up, in=down] (0,0.5) to[out=up,in=down] (0.4,1);
        \draw[wipe] (0,0) -- (0.8,1);
        \draw[->] (0,0) -- (0.8,1);
	    \draw[->] (1.2,0) to (1.2,1);
	    \node at (1.6,0.5) {$\cdots$};
	    \node at (1.6,0) {\regionlabel{n}};
	    \draw[->] (2,0) to (2,1);
	\end{tikzpicture}
	\ &=\
    \begin{tikzpicture}[anchorbase]
	    \draw[->] (-1.2,0) to (-1.2,1);
	    \node at (-0.8,0.5) {$\cdots$};
	    \node at (-0.8,0) {\regionlabel{m}};
	    \draw[->] (-0.4,0) to (-0.4,1);
	   \draw[->] (0.8,0) to  (0,1);
        \draw[wipe] (0.4,0) to[out=up, in=down] (0.8,0.5) to[out=up,in=down] (0.4,1);
        \draw[->] (0.4,0) to[out=up, in=down] (0.8,0.5) to[out=up,in=down] (0.4,1);
        \draw[wipe] (0,0) to  (0.8,1);
        \draw[->] (0,0) to  (0.8,1);	
        \draw[->] (1.2,0) to (1.2,1);
	    \node at (1.6,0.5) {$\cdots$};
	    \node at (1.6,0) {\regionlabel{n}};
	    \draw[->] (2,0) to (2,1);
	\end{tikzpicture},
\end{split}
\end{align}
for all $m,n \in \N$, and $C_\cB$ consists of morphisms
\begin{align} \label{Briad-Int}
    \begin{tikzpicture}[anchorbase]
	    \draw[->] (-1.2,0) to (-1.2,1);
	    \node at (-0.8,0.5) {$\cdots$};
	    \node at (-0.8,0) {\regionlabel{m}};
	    \draw[->] (-0.4,0) to (-0.4,1);
	    \draw[->] (0,0) to (0,0.5) to [out=90, in=270]  (0.4,1);
	    \draw[wipe] (0.4,0) to (0.4,0.5) to [out=90, in=270]  (0,1);
	    \draw[->] (0.4,0) to (0.4,0.5) to [out=90, in=270]  (0,1);
	    \draw[->] (0.8,0) to (0.8,1);
	    \node at (1.2,0.5) {$\cdots$};
	    \node at (1.2,0) {\regionlabel{l}};
	    \draw[->] (1.6,0) to (1.6,1);
	    \draw[->] (2,0) to [out=90, in=270]  (2.4,0.5) to (2.4,1);
	    \draw[wipe] (2.4,0) to [out=90, in=270]  (2,0.5) to (2,1);
	    \draw[->] (2.4,0) to [out=90, in=270]  (2,0.5) to (2,1);
	    \draw[->] (2.8,0) to (2.8,1);
	    \node at (3.2,0.5) {$\cdots$};
	    \node at (3.2,0) {\regionlabel{n}};
	    \draw[->] (3.6,0) to (3.6,1);
	\end{tikzpicture}
	\ =\
	    \begin{tikzpicture}[anchorbase]
	    \draw[->] (-1.2,0) to (-1.2,1);
	    \node at (-0.8,0.5) {$\cdots$};
	    \node at (-0.8,0) {\regionlabel{m}};
	    \draw[->] (-0.4,0) to (-0.4,1);
	    \draw[->] (0,0) to [out=90, in=270] (0.4,0.5) to   (0.4,1);
	    \draw[wipe] (0.4,0) to  [out=90, in=270]  (0,0.5) to (0,1);
	    \draw[->] (0.4,0) to  [out=90, in=270]  (0,0.5) to (0,1);
	    \draw[->] (0.8,0) to (0.8,1);
	    \node at (1.2,0.5) {$\cdots$};
	    \node at (1.2,0) {\regionlabel{l}};
	    \draw[->] (1.6,0) to (1.6,1);
	    \draw[->] (2,0) to  (2.0,0.5) to  [out=90, in=270] (2.4,1);
	    \draw[wipe] (2.4,0) to  (2.4 ,0.5) to [out=90, in=270] (2,1);
	    \draw[->] (2.4,0) to  (2.4 ,0.5) to [out=90, in=270] (2,1);
	    \draw[->] (2.8,0) to (2.8,1);
	    \node at (3.2,0.5) {$\cdots$};
	    \node at (3.2,0) {\regionlabel{n}};
	    \draw[->] (3.6,0) to (3.6,1);
	\end{tikzpicture},
\end{align}
where $m,n,l\in \N$.
\end{itemize}

Fix a natural number $d$, we have the endomoprhism algebra $\End(\uparrow^d)$ is generated by
\begin{equation}\label{braid-end-generator}
    s_i=
    \begin{tikzpicture}[anchorbase]
	    \draw[->] (-1.2,0) to (-1.2,0.5);
	    \node at (-0.8,0.25) {$\cdots$};
	    \node[below] at (-0.8,0) {\regionlabel{d-1-i}};
	    \draw[->] (-0.4,0) to (-0.4,0.5);
	    \draw[->] (0.4,0) to [out=90, in=270]  (0,0.5);
	    \draw[wipe] (0,0) to [out=90, in=270]  (0.4,0.5);
	    \draw[->] (0,0) to [out=90, in=270]  (0.4,0.5);
	    \draw[->] (0.8,0) to (0.8,0.5);
	    \node at (1.2,0.25) {$\cdots$};
	    \node[below] at (1.2,0) {\regionlabel{i-1}};
	    \draw[->] (1.6,0) to (1.6,0.5);
	\end{tikzpicture},
		\quad \text{and} \quad
		s_i^{-1}=
	    \begin{tikzpicture}[anchorbase]
	    \draw[->] (-1.2,0) to (-1.2,0.5);
	    \node at (-0.8,0.25) {$\cdots$};
	    \node[below] at (-0.8,0) {\regionlabel{d-1-i}};
	    \draw[->] (-0.4,0) to (-0.4,0.5);
	    \draw[->] (0,0) to [out=90, in=270]  (0.4,0.5);
	    \draw[wipe] (0.4,0) to [out=90, in=270]  (0,0.5);
	    \draw[->] (0.4,0) to [out=90, in=270]  (0,0.5);
	    \draw[->] (0.8,0) to (0.8,0.5);
	    \node at (1.2,0.25) {$\cdots$};
	    \node[below] at (1.2,0) {\regionlabel{i-1}};
	    \draw[->] (1.6,0) to (1.6,0.5);
	\end{tikzpicture},
\end{equation}
where $i=1,\dots,d-1$, and they satisfy the relation
\begin{align}\label{braid-end-rel}
\begin{split}
    s_{i+1} s_i s_{i+1} &= s_i s_{i+1} s_i,  \quad i=1,\dots,d_1\\
    s_is_j &= s_js_i, \qquad \left|i-j\right|>1.
\end{split}
\end{align}
This presentation is the presentation of the group algebra of the briad group on $d$ strands.

%--------------------------
\subsection{Hecke algebras}
%--------------------------

Fix $z \in \kk^\times$.  Let $\cH(z)$ be the strict $\kk$-linear monoidal category $\cB$ defined in \cref{braid-cat}, but with one more relation:
\begin{equation} \label{skein}
  \begin{tikzpicture}[anchorbase]
    \draw[->] (0.25,-0.25) to (-0.25,0.25);
    \draw[wipe] (-0.25,-0.25) to (0.25,0.25);
    \draw[->] (-0.25,-0.25) to (0.25,0.25);
  \end{tikzpicture}
  \ -\
  \begin{tikzpicture}[anchorbase]
    \draw[->] (-0.25,-0.25) to (0.25,0.25);
    \draw[wipe] (0.25,-0.25) to (-0.25,0.25);
    \draw[->] (0.25,-0.25) to (-0.25,0.25);
  \end{tikzpicture}
  \ = z\
  \begin{tikzpicture}[anchorbase]
    \draw[->] (0.2,-0.25) to (0.2,0.25);
    \draw[->] (-0.2,-0.25) to (-0.2,0.25);
  \end{tikzpicture}
  \ .
\end{equation}
We take $\kk = \C(q)$ and $z = q - q^{-1}$ and fix $d\in \N$. Since all morphisms in $\cH$ are endomorphisms, \cref{thm:presentationofalgebra} applies. We have that all generators in $\End_{\cH(z)}(\uparrow^d)$ are those in \cref{braid-end-generator} and all relations are those in \cref{braid-end-rel} together with
\begin{align}
    \begin{tikzpicture}[anchorbase, yscale=1.2]
	    \draw[->] (-1.2,0) to (-1.2,0.5);
	    \node at (-0.8,0.25) {$\cdots$};
	    \node[below] at (-0.8,0) {\regionlabel{d-1-i}};
	    \draw[->] (-0.4,0) to (-0.4,0.5);
	    \draw[->] (0.4,0) to  [out=90, in=270]  (0,0.5);
	    \draw[wipe] (0,0) to [out=90, in=270] (0.4,0.5);
	    \draw[->] (0,0) to [out=90, in=270] (0.4,0.5);
	    \draw[->] (0.8,0) to (0.8,0.5);
	    \node at (1.2,0.25) {$\cdots$};
	    \node[below] at (1.2,0) {\regionlabel{i-1}};
	    \draw[->] (1.6,0) to (1.6,0.5);
    \end{tikzpicture}
    \ -\
    \begin{tikzpicture}[anchorbase, yscale=1.2]
	    \draw[->] (-1.2,0) to (-1.2,0.5);
	    \node at (-0.8,0.25) {$\cdots$};
	    \node[below] at (-0.8,0) {\regionlabel{d-1-i}};
	    \draw[->] (-0.4,0) to (-0.4,0.5);
	    \draw[->] (0,0)to [out=90, in=270]  (0.4,0.5);
	    \draw[wipe] (0.4,0)  to [out=90, in=270]  (0,0.5);
	    \draw[->] (0.4,0)  to [out=90, in=270]  (0,0.5);
	    \draw[->] (0.8,0) to (0.8,0.5);
	    \node at (1.2,0.25) {$\cdots$};
	    \node[below] at (1.2,0) {\regionlabel{i-1}};
	    \draw[->] (1.6,0) to (1.6,0.5);
	\end{tikzpicture}
	\ =\
	(q - q^{-1})
	 \begin{tikzpicture}[anchorbase, yscale=1.2]
	    \draw[->] (-1.2,0) to (-1.2,0.5);
	    \node at (-0.8,0.25) {$\cdots$};
	    \node[below] at (-0.8,0) {\regionlabel{d-1-i}};
	    \draw[->] (-0.4,0) to (-0.4,0.5);
	    \draw[->] (0,0) to (0,0.5);
	    \draw[->] (0.4,0) to (0.4,0.5);
	    \draw[->] (0.8,0) to (0.8,0.5);
	    \node at (1.2,0.25) {$\cdots$};
	    \node[below] at (1.2,0) {\regionlabel{i-1}};
	    \draw[->] (1.6,0) to (1.6,0.5);
	\end{tikzpicture}.
\end{align}
then $\End_{\cH(z)} (\uparrow^{\otimes d})$ is the \emph{Iwahori--Hecke algebra} of type $A_{d-1}$.

%\textcolor{red}{(The only reference I found is \href{https://ncatlab.org/nlab/show/Hecke+algebra}{nLab: Hecke Algebra}.) }

%-----------------------------------
\subsection{Wreath product algebras}
%-----------------------------------
In this section we give the definition of wreath product categories by following the presentation in  \cite[\S 3.6]{Savage18}.
Let $A$ be an associative $\kk$-algebra. Define the \emph{wreath product category} $\cW(A)$ to be the strict $\kk$-linear monoidal category obtained from $\cS$ by adding morphisms such that we have an algebra homomorphipsm
\[
  A \to \End(\uparrow),
  \qquad
  a \mapsto
  \begin{tikzpicture}[anchorbase]
    \draw[->] (0,0) to (0,0.6);
    \bluedot{(0,0.3)} node[anchor=west,color=black] {\dotlabel{a}};
  \end{tikzpicture}
  \ .
\]
In particular, this means that
\begin{equation} \label{dotlin}
  \begin{tikzpicture}[anchorbase]
    \draw[->] (0,0) to (0,0.6);
    \bluedot{(0,0.3)} node[anchor=west,color=black] {\dotlabel{(\alpha a+ \alpha b)}};
  \end{tikzpicture}
  = \alpha\
  \begin{tikzpicture}[anchorbase]
    \draw[->] (0,0) to (0,0.6);
    \bluedot{(0,0.3)} node[anchor=west,color=black] {\dotlabel{a}};
  \end{tikzpicture}
  + \beta\
  \begin{tikzpicture}[anchorbase]
    \draw[->] (0,0) to (0,0.6);
    \bluedot{(0,0.3)} node[anchor=west,color=black] {\dotlabel{b}};
  \end{tikzpicture}
  \qquad \text{and} \qquad
  \begin{tikzpicture}[anchorbase]
    \draw[->] (0,0) to (0,1);
    \bluedot{(0,0.3)} node[anchor=west,color=black] {\dotlabel{b}};
    \bluedot{(0,0.6)} node[anchor=west,color=black] {\dotlabel{a}};
  \end{tikzpicture}
  =\
  \begin{tikzpicture}[anchorbase]
    \draw[->] (0,0) to (0,1);
    \bluedot{(0,0.5)} node[anchor=west,color=black] {\dotlabel{ab}};
  \end{tikzpicture}
  \qquad \text{for all } \alpha,\beta \in \kk,\ a,b \in A.
\end{equation}
We call the closed circles appearing in the above diagrams \emph{tokens}.  We then impose the additional relation
\begin{equation} \label{tokslide}
  \begin{tikzpicture}[anchorbase, scale=0.75]
    \draw[->] (0,0) -- (1,1);
    \draw[->] (1,0) -- (0,1);
    \bluedot{(.25,.25)} node [anchor=east, color=black] {\dotlabel{a}};
  \end{tikzpicture}
  \ =\
  \begin{tikzpicture}[anchorbase, scale=0.75]
    \draw[->](0,0) -- (1,1);
    \draw[->](1,0) -- (0,1);
    \bluedot{(0.75,.75)} node [anchor=east, color=black] {\dotlabel{a}};
  \end{tikzpicture}
  \ ,\quad a \in A.
\end{equation}
Note that we can compose \cref{tokslide} on the top and bottom with a crossing to obtain
\[
  \begin{tikzpicture}[anchorbase,scale=0.75]
    \draw[->] (0,0) -- (1,1);
    \draw[->] (1,0) -- (0,1);
    \bluedot{(.25,.25)} node [anchor=east, color=black] {\dotlabel{a}};
  \end{tikzpicture}
  \ =\
  \begin{tikzpicture}[anchorbase,scale=0.75]
    \draw[->](0,0) -- (1,1);
    \draw[->](1,0) -- (0,1);
    \bluedot{(0.75,.75)} node [anchor=east, color=black] {\dotlabel{a}};
  \end{tikzpicture}
  \ \implies\
  \begin{tikzpicture}[anchorbase]
    \draw[->] (0.25,-0.5) to[out=up,in=down] (-0.25,0) to[out=up,in=down] (0.25,0.5) to[out=up,in=down] (-0.25,1);
    \draw[->] (-0.25,-0.5) to[out=up,in=down] (0.25,0) to[out=up,in=down] (-0.25,0.5) to[out=up,in=down] (0.25,1);
    \bluedot{(-0.25,0)} node[anchor=east,color=black] {\dotlabel{a}};
  \end{tikzpicture}
  \ =\
  \begin{tikzpicture}[anchorbase]
    \draw[->] (0.25,-0.5) to[out=up,in=down] (-0.25,0) to[out=up,in=down] (0.25,0.5) to[out=up,in=down] (-0.25,1);
    \draw[->] (-0.25,-0.5) to[out=up,in=down] (0.25,0) to[out=up,in=down] (-0.25,0.5) to[out=up,in=down] (0.25,1);
    \bluedot{(0.25,0.5)} node[anchor=west,color=black] {\dotlabel{a}};
  \end{tikzpicture}
  \ \stackrel{\cref{Sn-strings}}{\implies} \
  \begin{tikzpicture}[anchorbase,scale=0.75]
    \draw[->] (0,0) -- (1,1);
    \draw[->] (1,0) -- (0,1);
    \bluedot{(0.25,.75)} node [anchor=north east, color=black] {\dotlabel{a}};
  \end{tikzpicture}
  \ =\
  \begin{tikzpicture}[anchorbase,scale=0.75]
    \draw[->] (0,0) -- (1,1);
    \draw[->] (1,0) -- (0,1);
    \bluedot{(.75,.25)} node [anchor=south west, color=black] {\dotlabel{a}};
  \end{tikzpicture}
  \ .
\]
So tokens also slide up-left through crossings.

We have the following theorem in \cite[\S  3.6]{Savage18} stated without proof.
\begin{prop}
The endomoprhism algebras of the wreath product algebra satisfy
\[
  \End_{\cW(A)} (\uparrow^{\otimes d}) \cong A^{\otimes d} \rtimes S_d,
\]
which is called the $d$-th \emph{wreath product algebra} associated to $A$, where the multiplication is determined by
\[
  (\baone \otimes \pi_1) (\batwo \otimes \pi_2)
  = (\baone (\pi_1\cdot \ba_{2})\otimes \pi_1 \pi_2),
  \quad \baone, \batwo \in A^{\otimes d},\ \pi_1, \pi_2 \in S_d,
\]
where $\pi_1 \cdot a_2$ denotes the natural action of $\pi_1 \in S_d$ on $a_2 \in A^{\otimes d}$ by permutation of the factors.
\end{prop}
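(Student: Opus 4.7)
The plan is to apply \cref{thm:presentationofalgebra} to $\cW(A)$ at the object $\uparrow^{\otimes d}$, extract a presentation of $\End_{\cW(A)}(\uparrow^{\otimes d})$, and identify it with the standard presentation of $A^{\otimes d}\rtimes S_d$. Since every generating morphism of $\cW(A)$ is an endomorphism of $\uparrow$, the theorem provides as generators the crossings $s_i$ for $i=1,\dots,d-1$ from \cref{sec:Sym} together with the tokens $t_i(a)$ for $a\in A$ and $i=1,\dots,d$, where $t_i(a)$ denotes a token labeled $a$ placed on the $i$-th strand. The defining relations split into four families: (i) the Coxeter relations on the $s_i$ from \cref{sec:Sym}; (ii) $\kk$-linearity and multiplicativity in $a$, namely $t_i(\alpha a+\beta b)=\alpha t_i(a)+\beta t_i(b)$ and $t_i(a)t_i(b)=t_i(ab)$, together with $t_i(1_A)=1$, all from \cref{dotlin}; (iii) commutation of tokens on distinct strands $t_i(a)t_j(b)=t_j(b)t_i(a)$ for $i\ne j$ from the interchange law; and (iv) $s_i t_i(a)=t_{i+1}(a)s_i$ and $s_i t_{i+1}(a)=t_i(a)s_i$ from \cref{tokslide} and the consequence derived immediately after it, together with $s_i t_j(a)=t_j(a)s_i$ for $j\notin\{i,i+1\}$ from interchange.

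Next I would define a $\kk$-algebra homomorphism
\[
  \Phi\colon \End_{\cW(A)}(\uparrow^{\otimes d})\longrightarrow A^{\otimes d}\rtimes S_d,\quad
  s_i\mapsto 1\otimes\sigma_i,\quad t_i(a)\mapsto e_i(a)\otimes 1,
\]
where $\sigma_i\in S_d$ is the $i$-th simple transposition and $e_i(a)=1\otimes\dots\otimes a\otimes\dots\otimes 1\in A^{\otimes d}$ has $a$ in the $i$-th tensor slot. Relations (i)--(iii) are immediate from the relations in $S_d$ and the factorwise commuting structure of $A^{\otimes d}$, and (iv) is exactly the semidirect product identity: $(1\otimes\sigma_i)(e_i(a)\otimes 1)=(\sigma_i\cdot e_i(a))\otimes\sigma_i=e_{i+1}(a)\otimes\sigma_i=(e_{i+1}(a)\otimes 1)(1\otimes\sigma_i)$, using the permutation action of $S_d$ on tensor factors. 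So $\Phi$ is well-defined.

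For the reverse direction, I would construct a $\kk$-linear map $\Psi\colon A^{\otimes d}\rtimes S_d\to \End_{\cW(A)}(\uparrow^{\otimes d})$ by choosing, for each $\pi\in S_d$, a reduced expression $s_\pi$ in the $s_i$, and setting $\Psi((a_1\otimes\dots\otimes a_d)\otimes\pi)=t_1(a_1)\cdots t_d(a_d)\cdot s_\pi$. Independence of reduced expression follows from (i), and compatibility with the semidirect product multiplication follows from sliding tokens past $s_\pi$ via (iv) and then using (ii)--(iii) to combine tokens; thus $\Psi$ is an algebra homomorphism. One checks directly that $\Phi\circ\Psi=\id$. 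To see $\Psi\circ\Phi=\id$, one uses (iv) to push every token downward below all crossings in an arbitrary product of generators, (iii) to arrange the resulting tokens by strand, (ii) to consolidate tokens on each strand, and (i) to reduce the remaining braid to the chosen $s_\pi$; the outcome is visibly in the image of $\Psi$.

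The main obstacle is precisely this last reduction: justifying that every element of $\End_{\cW(A)}(\uparrow^{\otimes d})$ is captured by the normal form $t_1(a_1)\cdots t_d(a_d)\cdot s_\pi$, so that $\Psi\circ\Phi=\id$. This is a careful but routine induction on the number of crossings, at each step applying (iv) to move the bottommost token down through one crossing (which also permutes its strand index), then using (iii) to reorder and (ii) to merge with any token already on the target strand. Once this reduction is carried out, both $\Phi$ and $\Psi$ are visibly mutual inverses and the stated algebra isomorphism $\End_{\cW(A)}(\uparrow^{\otimes d})\cong A^{\otimes d}\rtimes S_d$ follows.
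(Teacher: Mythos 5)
Your proposal is correct and follows essentially the same route as the paper: extract the presentation of $\End_{\cW(A)}(\uparrow^{\otimes d})$ via the endomorphism-algebra theorem, map generators to $A^{\otimes d}\rtimes S_d$ one way, and send $\ba\otimes\pi$ to a product of tokens followed by a word in the crossings the other way, checking both are algebra homomorphisms. The only cosmetic difference is that your verification of $\Psi\circ\Phi=\id$ by normal-form reduction is more work than needed — since both composites are algebra homomorphisms, it suffices to check them on the generators $s_i$ and $t_i(a)$, which is immediate.
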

\begin{proof}
All generators of  $\End_{\cW(A)} (\uparrow^{\otimes d})$ are endomorphisms; thus \cref{thm:presentationofalgebra} applies. The generators are $s_i$ defined in \cref{sym-end-gen} together with
\[
u_j(a) =
 \begin{tikzpicture}[anchorbase]
	    \draw[->] (-1.2,0) to (-1.2,0.5);
	    \node at (-0.8,0.25) {$\cdots$};
	    \node at (-0.8,0) {\regionlabel{d-j}};
	    \draw[->] (-0.4,0) to (-0.4,0.5);
	    \draw[->] (0,0) to (0,0.5);
	    \bluedot{(0,0.25)} node[anchor=west,color=black] {\dotlabel{a}};
	    \draw[->] (0.4,0) to (0.4,0.5);
	    \node at (0.8,0.25) {$\cdots$};
	    \node at (0.8,0) {\regionlabel{j-1}};
	    \draw[->] (1.2,0) to (1.2,0.5);
	\end{tikzpicture},
	\quad a \in A.
\]
By \cref{thm:presentationofalgebra}, the generators $s_i$ satisfy the relations of $\cS$ as in \cref{sym-end-rel}, and the generators $u_j(a)$ satisfy the relations
\begin{align}\label{prod-end-rel}
\begin{split}
%    u_j(\alpha a + \beta b) = \alpha u_j(a) + \beta u_j(b), \text{for all } \alpha, \beta \in \kk, a, b \in A,\\
  \begin{tikzpicture}[anchorbase]
    \draw[->] (-1.2,0) to (-1.2,0.5);
    \node at (-0.8,0.25) {$\cdots$};
    \node at (-0.8,0) {\regionlabel{d-j}};
    \draw[->] (-0.4,0) to (-0.4,0.5);
    \draw[->] (0,0) to (0,0.5);
    \bluedot{(0,0.25)} node[anchor=west,color=black] {\dotlabel{(\alpha a+ \beta b)}};
    \draw[->] (1.6,0) to (1.6,0.5);
    \node at (2,0.25) {$\cdots$};
    \node at (2,0) {\regionlabel{j-1}};
    \draw[->] (2.4,0) to (2.4,0.5);
  \end{tikzpicture}
  =& \alpha\
    \begin{tikzpicture}[anchorbase]
    \draw[->] (-1.2,0) to (-1.2,0.5);
    \node at (-0.8,0.25) {$\cdots$};
    \node at (-0.8,0) {\regionlabel{d-j}};
    \draw[->] (-0.4,0) to (-0.4,0.5);
    \draw[->] (0,0) to (0,0.5);
    \bluedot{(0,0.25)} node[anchor=west,color=black] {\dotlabel{a}};
    \draw[->] (0.4,0) to (0.4,0.5);
    \node at (0.8,0.25) {$\cdots$};
    \node at (0.8,0) {\regionlabel{j-1}};
    \draw[->] (1.2,0) to (1.2,0.5);
  \end{tikzpicture}
  + \beta\
    \begin{tikzpicture}[anchorbase]
    \draw[->] (-1.2,0) to (-1.2,0.5);
    \node at (-0.8,0.25) {$\cdots$};
    \node at (-0.8,0) {\regionlabel{d-j}};
    \draw[->] (-0.4,0) to (-0.4,0.5);
    \draw[->] (0,0) to (0,0.5);
    \bluedot{(0,0.25)} node[anchor=west,color=black] {\dotlabel{b}};
    \draw[->] (0.4,0) to (0.4,0.5);
    \node at (0.8,0.25) {$\cdots$};
    \node at (0.8,0) {\regionlabel{j-1}};
    \draw[->] (1.2,0) to (1.2,0.5);
  \end{tikzpicture},\\
%    u_j(a)u_j(b)=u_j(a,b) \quad \text{for all } a, b \in A,\\
    \begin{tikzpicture}[anchorbase]
    \draw[->] (-1.2,0) to (-1.2,1);
    \node at (-0.8,0.5) {$\cdots$};
    \node at (-0.8,0) {\regionlabel{d-j}};
    \draw[->] (-0.4,0) to (-0.4,1);
    \draw[->] (0,0) to (0,1);
    \bluedot{(0,0.75)} node[anchor=west,color=black] {\dotlabel{a}};
    \bluedot{(0,0.25)} node[anchor=west,color=black] {\dotlabel{b}};
    \draw[->] (0.4,0) to (0.4,1);
    \node at (0.8,0.5) {$\cdots$};
    \node at (0.8,0) {\regionlabel{j-1}};
    \draw[->] (1.2,0) to (1.2,1);
  \end{tikzpicture}
  =&\
   \begin{tikzpicture}[anchorbase]
    \draw[->] (-1.2,0) to (-1.2,0.5);
    \node at (-0.8,0.25) {$\cdots$};
    \node at (-0.8,0) {\regionlabel{d-j}};
    \draw[->] (-0.4,0) to (-0.4,0.5);
    \draw[->] (0,0) to (0,0.5);
    \bluedot{(0,0.25)} node[anchor=west,color=black] {\dotlabel{ab}};
    \draw[->] (0.6,0) to (0.6,0.5);
    \node at (1,0.25) {$\cdots$};
    \node at (1,0) {\regionlabel{j-1}};
    \draw[->] (1.4,0) to (1.4,0.5);
  \end{tikzpicture},\\
%    u_i(a)u_j(b) = u_j(b)u_i(a)\quad \text{for all } a, b \in A,
    \begin{tikzpicture}[anchorbase]
	    \draw[->] (-1.2,0) to (-1.2,1);
	    \node at (-0.8,0.5) {$\cdots$};
	    \node[below] at (-0.8,0) {\regionlabel{d-j}};
	    \draw[->] (-0.4,0) to (-0.4,1);
	    \draw[->] (0,0) to (0,1);
	    \bluedot{(0,0.75)} node[anchor=west,color=black] {\dotlabel{b}};
	    \draw[->] (0.4,0) to (0.4,1);
	    \node at (0.8,0.5) {$\cdots$};
	    \node[below] at (0.8,0) {\regionlabel{(j-i)-1}};
	    \draw[->] (1.2,0) to (1.2,1);
	  	\draw[->] (1.6,0) to (1.6,1);
	     \bluedot{(1.6,0.25)} node[anchor=west,color=black] {\dotlabel{a}};
	    \draw[->] (2,0) to (2, 1);
	    \node at (2.4,0.5) {$\cdots$};
	    \node[below] at (2.4,0) {\regionlabel{i-1}};
	    \draw[->] (2.8,0) to (2.8,1);
	\end{tikzpicture}
	\ =&\
	 \begin{tikzpicture}[anchorbase]
	    \draw[->] (-1.2,0) to (-1.2,1);
	    \node at (-0.8,0.5) {$\cdots$};
	    \node[below] at (-0.8,0) {\regionlabel{d-j}};
	    \draw[->] (-0.4,0) to (-0.4,1);
	    \draw[->] (0,0) to (0,1);
	    \bluedot{(0,0.25)} node[anchor=west,color=black] {\dotlabel{b}};
	    \draw[->] (0.4,0) to (0.4,1);
	    \node at (0.8,0.5) {$\cdots$};
	    \node[below] at (0.8,0) {\regionlabel{(j-i)-1}};
	    \draw[->] (1.2,0) to (1.2,1);
	  	\draw[->] (1.6,0) to (1.6,1);
	    \bluedot{(1.6,0.75)} node[anchor=west,color=black] {\dotlabel{a}};
	    \draw[->] (2,0) to (2, 1);
	    \node at (2.4,0.5) {$\cdots$};
	    \node[below] at (2.4,0) {\regionlabel{i-1}};
	    \draw[->] (2.8,0) to (2.8,1);
	\end{tikzpicture},
\end{split}
\end{align}
where $\alpha,\beta \in \kk$ and $a,b \in A$.
Most importantly, by \cref{tokslide}, these two kinds of generators together satisfy the following relations:
\begin{align} \label{prod-end-rel-int}
\begin{split}
    \begin{tikzpicture}[anchorbase]
	    \draw[->] (-1.2,0) to (-1.2,1);
	    \node at (-0.8,0.5) {$\cdots$};
	    \node[below] at (-0.8,0) {\regionlabel{d-1-i}};
	    \draw[->] (-0.4,0) to (-0.4,1);
	    \draw[->] (0,0) to (0,0.5) to [out=90, in=270]  (0.4,1);
	    \draw[->] (0.4,0) to (0.4,0.5) to [out=90, in=270]  (0,1);
	    \bluedot{(0.4,0.25)} node[anchor=west,color=black] {\dotlabel{a}};
	    \draw[->] (0.8,0) to (0.8,1);
	    \node at (1.2,0.5) {$\cdots$};
	    \node[below] at (1.2,0) {\regionlabel{i-1}};
	    \draw[->] (1.6,0) to (1.6,1);
	\end{tikzpicture}
    \ =&\
    \begin{tikzpicture}[anchorbase]
	    \draw[->] (-1.2,0) to (-1.2,1);
	    \node at (-0.8,0.5) {$\cdots$};
	    \node[below] at (-0.8,0) {\regionlabel{d-1-i}};
	    \draw[->] (-0.4,0) to (-0.4,1);
	    \draw[->] (0,0) to [out=90, in=270] (0.4,0.5) to   (0.4,1);
	    \draw[->] (0.4,0) to  [out=90, in=270]  (0,0.5) to (0,1);
	    {(0,0.75)};
	    \bluedot{(0,0.75)} node[anchor=west,color=black] {\dotlabel{a}};
	    \draw[->] (0.8,0) to (0.8,1);
	    \node at (1.2,0.5) {$\cdots$};
	    \node[below] at (1.2,0) {\regionlabel{i-1}};
	    \draw[->] (1.6,0) to (1.6,1);
    \end{tikzpicture},
	\\
	\begin{tikzpicture}[anchorbase]
	    \draw[->] (-1.6,0) to (-1.6,1);
	    \node at (-1.2,0.5) {$\cdots$};
	    \node[below] at (-1.2,0) {\regionlabel{d-1-j}};
	    \draw[->] (-0.8,0) to (-0.8,1);
        \draw[->] (-0.4,0) to  (-0.4,0.5) to [out=90, in=270] (0,1);
	    \draw[->] (0,0) to (0,0.5) to  [out=90, in=270]  (-0.4,1);
        \draw[->] (0.4,0) to (0.4,1);
	    \node at (0.8,0.5) {$\cdots$};
	    \node[below] at (0.8,0) {\regionlabel{(j-i)-1}};
	    \draw[->] (1.2,0) to (1.2,1);
	  	\draw[->] (1.6,0) to (1.6,1);
	    \bluedot{(1.6,0.25)} node[anchor=west,color=black] {\dotlabel{a}};
	    \draw[->] (2,0) to (2, 1);
	    \node at (2.4,0.5) {$\cdots$};
	    \node[below] at (2.4,0) {\regionlabel{i-1}};
	    \draw[->] (2.8,0) to (2.8,1);
	    \end{tikzpicture}
	\ =&\
	\begin{tikzpicture}[anchorbase]
	    \draw[->] (-1.6,0) to (-1.6,1);
	    \node at (-1.2,0.5) {$\cdots$};
	    \node[below] at (-1.2,0) {\regionlabel{d-1-j}};
	    \draw[->] (-0.8,0) to (-0.8,1);
        \draw[->] (-0.4,0) to [out=90, in=270] (0,0.5) to  (0,1);
	    \draw[->] (0,0) to  [out=90, in=270]  (-0.4,0.5) to (-0.4,1);
        \draw[->] (0.4,0) to (0.4,1);
	    \node at (0.8,0.5) {$\cdots$};
	    \node[below] at (0.8,0) {\regionlabel{(j-i)-1}};
	    \draw[->] (1.2,0) to (1.2,1);
	  	\draw[->] (1.6,0) to (1.6,1);
	   \bluedot{(1.6,0.75)} node[anchor=west,color=black] {\dotlabel{a}};
	    \draw[->] (2,0) to (2, 1);
	    \node at (2.4,0.5) {$\cdots$};
	    \node[below] at (2.4,0) {\regionlabel{i-1}};
	    \draw[->] (2.8,0) to (2.8,1);
	\end{tikzpicture},
\end{split}
\end{align}
where $a \in A$. Algebraically, \cref{prod-end-rel-int} reads
\[
    s_i u_j(b) = u_{s_i(j)}(b) s_i = (s_i \cdot u_j)(b) s_i, \quad \text{for all } b \in A,
\]
where we identify the generator $s_i$ with the transiposition $(i,i+1)$ and thus $s_i(j)$ denote the image of $j$ under $(i,i+1).$

Let $B$ be the free algebra generated by $u_j(b),\ j=1,\dots,n,\ b \in A$ and $s_i,\ i=1,\dots ,d-1$, we define a $\kk$-algebra morphism $\phi$ from $B$ to $A^{\otimes d} \rtimes S_d$ determined by the following:
\begin{align*}
    \phi(s_i) =& 1^{\otimes d} \otimes s_i, \\
    \phi(u_j(b))=& 1^{\otimes {d-j}}
    \otimes {b} \otimes 1^{\otimes j} \otimes \id_d,
\end{align*}
where $s_i$ denotes the transposition $(i,i+1)$ and $\id_d$ is the identity of $S_d$.
It is clear that $\phi$ is surjective and the kernel of $\phi$ contains all the relations on the generators of $\End_{\cW(A)}\left(\uparrow^d\right)$. In particular, \cref{prod-end-rel-int} is satisfied due to the product
\begin{align}
\begin{split}
    (\baone \otimes \pi_1)(\batwo \otimes \pi_2)
    =& (\baone(\pi_1 \cdot \batwo) \otimes \pi_1\pi_2).
\end{split}
\end{align}
Thus $\phi$ induces a surjective algebra morphism $\bar{\phi}$ from $\End_{\cW(A)}\left(\uparrow^d\right)$ to  $A^{\otimes d} \rtimes S_d$. Conversely, consider the following multi-linear map
\begin{align*}
      \psi: A^{d}\times S_d
      \ \to& \
     \End_{\cW(A)}(\uparrow^{\otimes d}),  \\
    (a_d\times \dotsb\times a_1 \times \pi) \ \mapsto&\ u_d(a_d)\dotsm u_1(a_1) \pi .
\end{align*}
It induces a natural $\kk$-module morphism as follows:
\begin{align*}
      \bar{\psi}: A^{\otimes d} \rtimes S_d= A^{\otimes d} \otimes \kk S_d
      \ \to& \
      \End_{\cW(A)}(\uparrow^{\otimes d}),  \\
    (a_d\otimes \dotsb\otimes a_1)\otimes \pi \ \mapsto&\ u_d(a_d)\dotsm u_1(a_1) \pi .
\end{align*}
The $\kk$-module morphism $\bar{\psi}$ actually forms an algebra morphism: for $\baone=(a_i)_{i=1}^d,\batwo=(b_i)_{i=1}^d \in A^{\otimes d}$ and $\pi_1,\pi_2 \in S_d$,
\begin{align}
\begin{split}
    \bar{\psi}((\baone \otimes \pi_1)(\batwo \otimes \pi_2))
    =& \bar{\psi} (\baone(\pi_1 \cdot \batwo) \otimes \pi_1\pi_2)\\
    =& u_d(a_d)\dotsm u_1(a_1)u_d(b_{\pi_1(d)}) ...u_1(b_{\pi_1(1)})\pi_1\pi_2\\
    \stackrel{\star}{=}& u_d(a_d)\dotsm u_1(a_1)\pi_1u_d(b_d) ...u_1(b_1)\pi_2\\
    =& \bar{\psi}(\baone \otimes \pi_1)\bar{\psi}(\batwo \otimes \pi_2),
\end{split}
\end{align}
where one can check that $\star$ holds by writting $\pi_1$ as a product of transpositions.

Since $\bar{\psi}$ is $\kk$-algebra morphism, it is clear that it is the inverse of $\bar{\phi}$.
This shows that $\End_\cW(A)\left(\uparrow^d\right)$ is isomorphic to $A^{\otimes d} \rtimes  S_d$.
\end{proof}

%------------------------------------------
\subsection{Affine wreath product algebras}
%------------------------------------------

The wreath product category $\cW(A)$ is a generalization of the symmetric group category $\cS$ that depends on a choice of associative $\kk$-algebra $A$. We now suppose that we have a $\kk$-linear \emph{trace map}
$  \tr \colon A \to \kk $
and dual bases $B$ and $\left\lbrace \chk{b} : b \in B\right\rbrace  $ of $A$ such that
\[
  \tr(ab) = \tr(ba) \quad \text{and} \quad
  \tr(\chk{a}b) = \delta_{a,b}
  \quad \text{for all } a,b \in B.
\]
It can be shown that
$  \sum_{b \in B} b \otimes \chk{b} \in A \otimes A$
is independent of the choice of basis $B$. Also, for all $x \in A$, we have
\begin{equation} \label{teleport}
  \begin{split}
    \sum_{b \in B} bx \otimes \chk{b}
    = \sum_{a \in B} a \otimes x\chk{a}.
  \end{split}
\end{equation}

We define the \emph{affine wreath product category} $\AW(A)$ to be the strict $\kk$-linear monoidal category obtained from $\cW(A)$ by adding a generating morphism
$
  \begin{tikzpicture}[anchorbase]
    \draw[->] (0,0) to (0,0.6);
    \redcircle{(0,0.3)};
  \end{tikzpicture}
  \ \colon \uparrow\ \to\ \uparrow
$
and the additional relations
\begin{equation} \label{AWPA}
  \begin{tikzpicture}[anchorbase]
    \draw[->] (0,0) -- (0.6,0.6);
    \draw[->] (0.6,0) -- (0,0.6);
    \redcircle{(0.15,.45)};
  \end{tikzpicture}
  \ -\
  \begin{tikzpicture}[anchorbase]
    \draw[->] (0,0) -- (0.6,0.6);
    \draw[->] (0.6,0) -- (0,0.6);
    \redcircle{(.45,.15)};
  \end{tikzpicture}
  \ = \sum_{b \in B}
  \begin{tikzpicture}[anchorbase]
    \draw[->] (0,0) -- (0,0.6);
    \draw[->] (0.3,0) -- (0.3,0.6);
    \bluedot{(0,0.3)} node[anchor=east,color=black] {\dotlabel{b}};
    \bluedot{(0.3,0.3)} node[anchor=west,color=black] {\dotlabel{\chk{b}}};
  \end{tikzpicture}
  \qquad \text{and} \qquad
  \begin{tikzpicture}[anchorbase]
    \draw[->] (0,0) to (0,1);
    \redcircle{(0,0.3)};
    \bluedot{(0,0.6)} node[anchor=east,color=black] {\dotlabel{a}};
  \end{tikzpicture}
  \ =
  \begin{tikzpicture}[anchorbase]
    \draw[->] (0,0) to (0,1);
    \redcircle{(0,0.6)};
    \bluedot{(0,0.3)} node[anchor=west,color=black] {\dotlabel{a}};
  \end{tikzpicture}
  \ ,\quad \text{for all } a \in A.
\end{equation}

\begin{prop} The endomorphism algebra
$  \End_{\AW(A)}(\uparrow^{\otimes d}) $ in the affine wreath product category is an affine wreath product algebra as defined in \cite[Definition 3.1]{Sav17}. Namely, as an $\kk$-algebra, $  \End_{\AW(A)}(\uparrow^{\otimes d}) $  is generated by
\[
t_i, \ i=1,\dots,d,   \quad u_i(a),\  i=1,\dots,d,\  a\in A, \quad \text{and} \quad s_i,\ i=1,\dots,d-1,
\]
where they satisfy  \cref{sym-end-rel,ahdeg-end-interchange,prod-end-rel,prod-end-rel-int} and the following relations:
\begin{align}
\begin{split}
    t_iu_j(a) = u_j(a)t_i, \quad &\text{for all } a \in A\\
    t_{i+1}s_i - s_it_i = m_i, \quad &\text{for }i=1,\dots,d-1,
\end{split}
\end{align}
where $m_i=\sum_{b \in B} u_{i+1}(b)u_i(\chk{b})$.
\end{prop}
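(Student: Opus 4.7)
The plan is to apply \cref{thm:presentationofalgebra} to the given monoidal presentation of $\AW(A)$, read off a presentation of $\End_{\AW(A)}(\uparrow^{\otimes d})$ as an algebra, and then match that presentation termwise with the one in \cite[Definition 3.1]{Sav17}. Since all generating morphisms of $\AW(A)$ are endomorphisms, \cref{thm:presentationofalgebra} applies verbatim: the generators of $\End_{\AW(A)}(\uparrow^{\otimes d})$ are the elements of $\bar{E}_{\uparrow^d}$, and the defining relations are exactly the elements of $\bar{R}_{\uparrow^d}$.

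First I would enumerate the generators. The set $\bar{E}_{\uparrow^d}$ consists of triples $(\uparrow^a, e, \uparrow^b)$ with $a+b+|\partial_0 e|=d=a+b+|\partial_1 e|$. Running $e$ through the three generators of $\AW(A)$ (crossing, blue token labeled $a\in A$, red circle) and over all admissible $a,b$ gives precisely the three families $s_i$ ($i=1,\dots,d-1$), $u_j(a)$ ($j=1,\dots,d$, $a\in A$), and $t_j$ ($j=1,\dots,d$) named in the statement.

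Next I would split $\bar{R}_{\uparrow^d}$ into the interchange relations $C$ and the inherited relations $R'$, and route each family to one of the listed groups. The interchange relations give all commutations between generators living on disjoint subsets of strands: these yield the commutation relations in \cref{ahdeg-end-interchange,prod-end-rel-int}, the $|i-j|>1$ relation of \cref{sym-end-rel}, and the case $i\ne j$ of $t_i u_j(a)=u_j(a) t_i$. The inherited relations $r_{v,w}$ come in three packets: from the defining relations of $\cS$ I recover the remaining braid and involution relations of \cref{sym-end-rel}; from the wreath-product relations \cref{dotlin,tokslide} I recover \cref{prod-end-rel} and the crossing/token interchange in \cref{prod-end-rel-int}; from the two affine relations \cref{AWPA} I obtain the two genuinely new families. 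Specifically, tensoring the first relation of \cref{AWPA} with $1_{\uparrow^{d-1-i}}\otimes(-)\otimes 1_{\uparrow^{i-1}}$ produces $t_{i+1}s_i - s_i t_i = \sum_{b\in B} u_{i+1}(b)\, u_i(\chk{b}) = m_i$, while the same operation applied to the second relation of \cref{AWPA} yields the $i=j$ case of $t_i u_j(a) = u_j(a) t_i$, merging with the interchange case to give the relation for all $i,j$.

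Finally I would compare this presentation with the one in \cite[Definition 3.1]{Sav17} and observe that the generators and relations match on the nose, which identifies $\End_{\AW(A)}(\uparrow^{\otimes d})$ with the affine wreath product algebra. The main obstacle is bookkeeping rather than conceptual: one must keep careful track of the indexing conventions (left vs.\ right numbering of strands, the shift by one in $s_i$ versus $t_{i+1}$) when converting the diagrammatic relation \cref{AWPA} into its algebraic form, and verify that the sum $\sum_{b\in B} u_{i+1}(b)u_i(\chk b)$ does indeed become the $m_i$ appearing in \cite{Sav17}; this in turn uses the basis-independence of $\sum_{b\in B} b\otimes\chk b$ noted in \cref{teleport}. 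Once the dictionary is set up, no further computation beyond the direct application of \cref{thm:presentationofalgebra} is required.
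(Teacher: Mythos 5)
Your proposal is correct and follows essentially the same route as the paper: apply \cref{thm:presentationofalgebra} to the monoidal presentation of $\AW(A)$, read off the generators $s_i$, $u_j(a)$, $t_j$, and sort the relations into those inherited from $\cW(A)$, the interchange relations, and the two new families coming from \cref{AWPA}. Your accounting is if anything slightly more careful than the paper's (e.g.\ in attributing the same-strand commutation $t_ju_j(a)=u_j(a)t_j$ to the second relation of \cref{AWPA} rather than to the interchange law), but the argument is the same.
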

\begin{proof}
As a linear monoidal category, all generators of $\AW(A)$ are endormophisms, thus \cref{thm:presentationofalgebra} applies. Thus for fixed $d$, we have that the generators of
$ \End_{\AW(A)}(\uparrow^{\otimes d}) $
are the generators of $\End_{\cW(A)}(\uparrow^{\otimes d})$ together with
\begin{equation}
    t_i =
	  \begin{tikzpicture}[anchorbase]
	    \draw[->] (-1.2,0) to (-1.2,0.5);
	    \node at (-0.8,0.25) {$\cdots$};
	    \node[below] at (-0.8,0) {\regionlabel{d-j}};
	    \draw[->] (-0.4,0) to (-0.4,0.5);
	    \draw[->] (0,0) to (0,0.5);
	    \redcircle{(0,0.25)};
	    \draw[->] (0.4,0) to (0.4,0.5);
	    \node at (0.8,0.25) {$\cdots$};
	    \node[below] at (0.8,0) {\regionlabel{j-1}};
	    \draw[->] (1.2,0) to (1.2,0.5);
	\end{tikzpicture}.
\end{equation}
The relations they satisfy in $\AW(A)$ should be the relations they satisfy in $\cW(A)$, namely \cref{sym-end-rel,prod-end-rel,prod-end-rel-int}, together with the interchange law involving $t_i$ and the new relation induced by the first relation in \cref{AWPA}. To be precise, the interchange law involving $t_i$ corresponds to \cref{ahdeg-end-interchange} and
\[
 \begin{tikzpicture}[anchorbase]
	    \draw[->] (-1.2,0) to (-1.2,1);
	    \node at (-0.8,0.5) {$\cdots$};
	    \node[below] at (-0.8,0) {\regionlabel{d-j}};
	    \draw[->] (-0.4,0) to (-0.4,1);
	    \draw[->] (0,0) to (0,1);
	    \redcircle{(0,0.25)};
	    \bluedot{(0,0.75)} node[anchor=west, color=black] {\dotlabel{a}};
	    \draw[->] (0.4,0) to (0.4,1);
	    \node at (0.8,0.5) {$\cdots$};
	    \node[below] at (0.8,0) {\regionlabel{j-1}};
	    \draw[->] (1.2,0) to (1.2,1);
	\end{tikzpicture}
	=
	 \begin{tikzpicture}[anchorbase]
	    \draw[->] (-1.2,0) to (-1.2,1);
	    \node at (-0.8,0.5) {$\cdots$};
	    \node[below] at (-0.8,0) {\regionlabel{d-j}};
	    \draw[->] (-0.4,0) to (-0.4,1);
	    \draw[->] (0,0) to (0,1);
	    \redcircle{(0,0.75)};
	    \bluedot{(0,0.25)} node[anchor=west, color=black] {\dotlabel{a}};
	    \draw[->] (0.4,0) to (0.4,1);
	    \node at (0.8,0.5) {$\cdots$};
	    \node[below] at (0.8,0) {\regionlabel{j-1}};
	    \draw[->] (1.2,0) to (1.2,1);
	\end{tikzpicture}.
\]
The induced relations, by \cref{MainTheorem}, are
\[
    \begin{tikzpicture}[anchorbase]
	    \draw[->] (-1.2,0) to (-1.2,1);
	    \node at (-0.8,0.5) {$\cdots$};
	    \node[below] at (-0.8,0) {\regionlabel{d-1-i}};
	    \draw[->] (-0.4,0) to (-0.4,1);
	    \draw[->] (0,0) to [out=90, in=270] (0.4,0.5) to   (0.4,1);
	    \draw[->] (0.4,0) to  [out=90, in=270]  (0,0.5) to (0,1);
	    \redcircle{(0,0.75)};
	    \draw[->] (0.8,0) to (0.8,1);
	    \node at (1.2,0.5) {$\cdots$};
	    \node[below] at (1.2,0) {\regionlabel{i-1}};
	    \draw[->] (1.6,0) to (1.6,1);
    \end{tikzpicture}
    \ -\
    \begin{tikzpicture}[anchorbase]
	    \draw[->] (-1.2,0) to (-1.2,1);
	    \node at (-0.8,0.5) {$\cdots$};
	    \node[below] at (-0.8,0) {\regionlabel{d-1-i}};
	    \draw[->] (-0.4,0) to (-0.4,1);
	    \draw[->] (0,0) to (0,0.5) to [out=90, in=270]  (0.4,1);
	    \draw[->] (0.4,0) to (0.4,0.5) to [out=90, in=270]  (0,1);
	    \redcircle{(0.4,0.25)};
	    \draw[->] (0.8,0) to (0.8,1);
	    \node at (1.2,0.5) {$\cdots$};
	    \node[below] at (1.2,0) {\regionlabel{i-1}};
	    \draw[->] (1.6,0) to (1.6,1);
	\end{tikzpicture}
	\ =\
	\sum_{b \in B}
	 \begin{tikzpicture}[anchorbase]
	    \draw[->] (-1.2,0) to (-1.2,1);
	    \node at (-0.8,0.5) {$\cdots$};
	    \node[below] at (-0.8,0) {\regionlabel{d-1-i}};
	    \draw[->] (-0.4,0) to (-0.4,1);
	    \draw[->] (0,0) to (0,1);
	    \bluedot{(0,0.5)} node[anchor=east, color=black] {\dotlabel{b}};
	    \draw[->] (0.4,0) to (0.4,1);
	    \bluedot{(0.4,0.5)} node[anchor=west, color=black] {\dotlabel{\chk{b}}};
	    \draw[->] (1,0) to (1,1);
	    \node at (1.4,0.5) {$\cdots$};
	    \node[below] at (1.4,0) {\regionlabel{i-1}};
	    \draw[->] (1.8,0) to (1.8,1);
	\end{tikzpicture}.
\]
Thus the endomorphism algebra is an affine wreath product algebra.
\end{proof}
It worth mentioning that $\End_{\AW(A)}(\uparrow^{\otimes d})$, as a $\kk$-module, is isomorphic to
\[
    \kk[x_1,\dots,x_d] \otimes A^{\otimes d} \otimes \kk S_d.
\]
This is because each morphism in the basis, by the relations, is a linear combination of morphisms of the form
\[
    t_d^{l_d} \dots t_1^{l_1} u_d(a_d) \dots u_1(a_1) \pi, \quad l_i \in \N,\ a_i \in A,\  \pi \in S_d.
\]
Thus there is a natural surjective linear map determined by
\begin{align}
\begin{split}
    \kk[x_1,\dots,x_d] \otimes A^{\otimes d} \otimes \kk S_d\  \to\ & \End_{\AW(A)}(\uparrow^{\otimes d}), \\
     x_d^{l_d} \dotsm x_1^{l_1} \otimes a_d \otimes \dotsb \otimes a_1 \otimes \pi  \ \mapsto\ & t_d^{l_d} \dotsm t_1^{l_1} u_d(a_d) \dotsm u_1(a_1) \pi.
\end{split}
\end{align}
Hence it suffices to check the injectivity. One can refer to \cite[Theorem 4.6]{Sav17} for a detailed proof.

\color{black}
%--------------------------------------------------
\subsection{Quantum affine wreath product algebras}
%--------------------------------------------------

One can also define affine versions of the Hecke category and, more generally, quantum versions of the affine wreath product category.  We refer the reader to \cite{BS18a,BS18b} for further details.

%---------------------
\color {black}

%=====================
%References
%=====================

\bibliographystyle{alphaurl}
\bibliography{biblist-2}
\par
\end{document}